\newtheorem{theorem}{Theorem}[section]
\newtheorem{corollary}{Corollary}[theorem]
\newtheorem{lemma}[theorem]{Lemma}
\newtheorem{conjecture}[theorem]{Conjecture}
\definecolor{lightgray}{gray}{0.8}
\newcolumntype{L}{>{\raggedleft}p{0.14\textwidth}}
\newcolumntype{R}{p{0.8\textwidth}}
\begin{document}

\title{On a Double Series Representation of the Natural Logarithm, the Asymptotic Behavior of Hölder Means, and an Elementary Estimate for the Prime Counting Function}

\author{Sinan Deveci}
\date{}
\maketitle
\thispagestyle{empty}

\begin{abstract}
We present many novel results in number theory, including a double series formula for the natural logarithm and a proof concerning the Hölder mean based on the functional equation for the Riemann zeta function. We find a harmonic mean analogue of Chebyshev's inequality for the prime counting function involving the Euler-Mascheroni constant. Furthermore, we define a function taking the Hölder mean of all positive integers up to a given number and investigate its asymptotic behavior, finding two different patterns which are separated by the harmonic mean. Additionally, we discuss the behavior of said function at zero and discover a formula involving the Riemann zeta function, whose continuity we prove with Riemann's functional equation. Inspired by the alternating harmonic series, we find a double series formula for the natural logarithm, resulting in identities involving the Riemann zeta function, binomial coefficients, and logarithms.

\end{abstract}

\newpage
\pagenumbering{roman}

\tableofcontents

\newpage
\pagenumbering{arabic}

\section{Introduction}
We generalize the equivalence between the alternating harmonic series and the natural logarithm of 2 and find a double series formula for the natural logarithm of all positive integers $\geq 2$ in Section \ref{logs}. We then proceed by raising the individual terms to positive integer powers in two different ways and find formulae involving again the Riemann zeta function and powers of $\pi$  as well as central entries of Pascal's triangle and thus binomial coefficients.
\\
\indent In Section \ref{pyth} we give an introduction to the Hölder mean, define a function that takes the $m$-mean of all positive integers less than or equal to a given number $x$, and investigate the dominant term of its asymptotic behavior. Approaching the value -1 for $m$, we find a sequence that is related to the discriminants of the Chebyshev $C$-Polynomials, as given by the OEIS $\cite{chebypoly}$. 
We extend the domain of this function to include 0 and find a relation to the Bernoulli numbers. We find a proof for the discontinuity at $m=0$ involving the functional equation for the Riemann zeta function.
\\
\indent Inspired by an inequality of Chebyshev, we find a bound for the prime counting function dependent on the harmonic mean, as shown in Section \ref{primes}. This allows us to estimate the number of primes in any subset of $\mathbb{N}$ using only elementary operations.
\\
\indent Analogous to how introducing factorials to the denominators of the geometric series gives the Taylor series expansion of the exponential functions, we introduce factorials to the denominators of arithmetic and harmonic series in Section \ref{pythsrs}, and thereby rediscover the Bell numbers.

\newpage

\section{Prime Counting Function}\label{primes}
As one cannot talk about (analytic) number theory without mentioning the prime number theorem, we shall devote the first section to the building blocks of mathematics, the primes. The most famous result about their distribution, the prime number theorem (PNT), establishes the following asymptotic equivalence:

\begin{equation}
	\label{PNT}
	\pi(x)\sim\frac{x}{\log{x}},
\end{equation}
i.e. $\displaystyle\lim_{x\rightarrow \infty} \frac{\pi(x)}{\frac{x}{\log{x}}}=1,$
where $\pi(x)$ is the number of primes $p\leq x$ and $\log$ is the natural logarithm. An important number in number theory is the Euler-Mascheroni constant, which is defined by
\begin{equation}
\label{eulergamma}
\gamma\coloneqq\lim_{n\rightarrow\infty} \Bigg(\sum_{k=1}^{n}\frac{1}{k} - \log{n}\Bigg)=\lim_{n\rightarrow\infty}\big(H_n-\log{n}\big)\approx 0.57721,
\end{equation}
where $H_n$ is the $n$th harmonic number. Since $H_n$ and $\log{n}$ are so close, the idea of combining equations \eqref{PNT} and \eqref{eulergamma} comes naturally. However, since $H_n$ is still substantially larger than $\log{n}$ for small values of $n$, modifications of $H_n$ have been used. Locker-Ernst \cite{locker-ernst} has used $H_n-\frac{3}{2}$, while Hua \cite{hua} used $H_n-1$. Although these will most probably give better approximations to $\pi(x)$, we will nevertheless investigate the case of using the ``original'' harmonic numbers. Thus we begin by claiming the following:
\begin{theorem}
\label{thm1}
\[\pi(x) \sim \frac{x}{H_x}=HM(x),\]
where $HM(x)$ denotes the harmonic mean of all positive integers $\leq x$.
\end{theorem}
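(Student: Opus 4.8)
The plan is to reduce the claim to the prime number theorem \eqref{PNT} together with the elementary asymptotic $H_x \sim \log x$, which is essentially already contained in the definition \eqref{eulergamma} of $\gamma$. Since asymptotic equivalence $\sim$ is transitive and is preserved under taking reciprocals of positive quantities, it suffices to prove that
\[
\frac{x}{H_x} \sim \frac{x}{\log x},
\]
for then chaining this with \eqref{PNT} immediately yields $\pi(x) \sim \frac{x}{\log x} \sim \frac{x}{H_x} = HM(x)$.

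To establish $\frac{x}{H_x} \sim \frac{x}{\log x}$, I would rewrite it as the statement $\frac{H_x}{\log x} \to 1$ as $x \to \infty$. First I would fix the bookkeeping: for real $x \ge 1$ the harmonic mean of $\{1,\dots,\lfloor x\rfloor\}$ is $\lfloor x\rfloor / H_{\lfloor x\rfloor}$, and replacing $\lfloor x\rfloor$ by $x$ in numerator and denominator perturbs the ratio only by a factor $1 + O(1/x)$, hence is asymptotically irrelevant; so it is enough to treat integer $n$. Then from \eqref{eulergamma} we have $H_n = \log n + \gamma + o(1)$, whence
\[
\frac{H_n}{\log n} = 1 + \frac{\gamma + o(1)}{\log n} \longrightarrow 1 \qquad (n \to \infty),
\]
which is exactly what is needed. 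Alternatively, one can bypass $\gamma$ entirely: comparing $\sum_{k=1}^{n} 1/k$ with $\int_1^{n} \frac{dt}{t}$ by monotonicity of $1/t$ gives $\log n \le H_n \le 1 + \log n$, and this already sandwiches $H_n/\log n$ between $1$ and $1 + 1/\log n$.

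I do not expect a genuine obstacle here; the only points requiring a little care are the floor-function bookkeeping that relates $\frac{x}{H_x}$ for real $x$ to the honest harmonic mean $\frac{\lfloor x\rfloor}{H_{\lfloor x\rfloor}}$, and making sure the error term in $H_n = \log n + \gamma + o(1)$ is handled cleanly — but since only $H_n \sim \log n$ is used, even the crude sandwich bound suffices. The entire substantive content is the prime number theorem \eqref{PNT}, which is assumed; Theorem~\ref{thm1} is then just the observation that the harmonic numbers are an admissible substitute for $\log x$ in the denominator of the PNT asymptotic.
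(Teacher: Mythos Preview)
Your proof is correct and follows essentially the same approach as the paper: both reduce the claim to the PNT together with the fact that $H_x - \log x$ stays bounded (converging to $\gamma$). The only cosmetic difference is packaging—the paper computes $\lim \frac{\pi(x)}{x}H_x$ directly by subtracting $\frac{\pi(x)}{x}\log x$ and using $\frac{\pi(x)}{x}\to 0$ together with $H_x-\log x\to\gamma$, whereas you first isolate $H_x\sim\log x$ and then invoke transitivity of $\sim$; your version is slightly more modular and your integral-comparison alternative even avoids $\gamma$ altogether.
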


\begin{proof}
\hypertarget{pf2.1}{}
We have to prove that
\[\lim_{x\rightarrow\infty} \frac{\pi(x)}{x}\cdot H_x=1.\]
From
\[\frac{\pi(x)}{x}\cdot H_x-\frac{\pi(x)}{x}\cdot \log{x}=\frac{\pi(x)}{x}\cdot (H_x-\log{x})\]
and
\begin{flalign*}
\lim_{x\rightarrow\infty} \frac{\pi(x)}{x}\cdot (H_x-\log{x})&=\lim_{x\rightarrow\infty} \frac{\pi(x)}{x}\cdot \lim_{x\rightarrow\infty} (H_x-\log{x}) \\
&=0\cdot\gamma=0
\end{flalign*}
we get
\[\lim_{x\rightarrow\infty}\bigg(\frac{\pi(x)}{x}\cdot H_x - \frac{\pi(x)}{x}\cdot\log{x}\bigg)=0.\]
Applying the PNT we end up with
\[\lim_{x\rightarrow\infty}\frac{\pi(x)}{x}\cdot H_x = \lim_{x\rightarrow\infty} \frac{\pi(x)}{x}\cdot\log{x}=1.\]
\end{proof}

Thus we have established that the number of primes less than or equal to a given magnitude is asymptotically equal to the harmonic mean of all positive integers up to that magnitude. And that to a certain extent again hints at the close connection that a very exclusive set of numbers has to all others, that the primes - although full of mystery and seemingly with no pattern whatsoever - are still deeply embedded in the nature of the integers.
From this discovery, given that some properties of the harmonic mean are known, two statements immediately follow.
Note that even though they follow directly from the PNT, they also follow from \thmref{thm1}. A proof without using the PNT exists but lies beyond the scope of this paper.
\begin{corollary}
There are infinitely many primes.
\end{corollary}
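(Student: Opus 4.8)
The plan is to derive the corollary directly from Theorem \ref{thm1}, which asserts that $\pi(x) \sim x/H_x$. The strategy is a proof by contradiction: suppose there are only finitely many primes. Then $\pi(x)$ is eventually constant, say $\pi(x) = N$ for all $x \geq x_0$, where $N$ is the total number of primes. Under this assumption, $\pi(x)$ is bounded, so $\lim_{x\to\infty} \pi(x) = N < \infty$.

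On the other hand, I would examine the right-hand side $x/H_x = HM(x)$. Since $H_x \leq 1 + \log x$ for $x \geq 1$ (a standard bound, or one can simply use $H_x - \log x \to \gamma$ so that $H_x \sim \log x$), we have $x/H_x \geq x/(1 + \log x) \to \infty$ as $x \to \infty$. Hence $HM(x) \to \infty$. By Theorem \ref{thm1}, $\pi(x) \sim HM(x)$, and since $HM(x) \to \infty$, it follows that $\pi(x) \to \infty$ as well (an asymptotic equivalence between a function and one tending to infinity forces the first to tend to infinity). This contradicts the boundedness of $\pi(x)$ derived above, so there must be infinitely many primes.

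The main obstacle — really the only subtlety — is ensuring the logical flow is airtight given that the corollary is stated as following \emph{from} Theorem \ref{thm1} rather than from the PNT directly; one must be careful not to invoke the PNT circularly, but since Theorem \ref{thm1} is already established, this is not actually a problem. A second minor point is justifying that $HM(x) \to \infty$; I would handle this cleanly by noting $H_x \to \infty$ but $H_x/x \to 0$, so $x/H_x \to \infty$, avoiding any need for sharp bounds on $H_x$. The argument is otherwise entirely elementary and short, consisting of little more than the observation that a divergent asymptotic partner cannot be equivalent to a bounded function.

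Alternatively, and perhaps more transparently, I would avoid contradiction entirely: from Theorem \ref{thm1} we have $\pi(x) \sim x/H_x$, and since $x/H_x \to \infty$ (as $H_x = o(x)$), we conclude $\pi(x) \to \infty$ directly, which immediately gives infinitely many primes. I expect the author's proof to take essentially this one-line route.
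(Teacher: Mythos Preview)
Your proposal is correct and matches the paper's proof essentially verbatim: the paper simply notes that $HM(x)$ increases without bound and that $\pi(x)\sim HM(x)$ forces $\pi(x)$ to do the same, exactly your anticipated ``one-line route.'' Your contradiction framing is a harmless repackaging of the same idea, and your justification that $x/H_x\to\infty$ is more explicit than what the paper provides.
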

\begin{proof}
The harmonic mean $HM(x)$ increases without bound as $x$ increases without bound. Since $\pi(x)$ is asymptotically equal to $HM(x)$, it must also increase without bound, therefore there are infinitely many primes.
\end{proof}
\begin{corollary}
The natural density of primes approaches zero, i.e. the average prime gaps get larger and larger.
\end{corollary}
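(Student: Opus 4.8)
The plan is to derive both halves of the statement from Theorem~\ref{thm1} (equivalently, directly from the PNT in~\eqref{PNT}). Recall that the natural density of the primes is by definition $\lim_{x\to\infty}\pi(x)/x$, so the first assertion amounts to showing this limit is $0$. From Theorem~\ref{thm1} we have $\pi(x)H_x/x\to 1$ as $x\to\infty$, hence $\pi(x)/x=\bigl(1+o(1)\bigr)H_x^{-1}$. Since the harmonic series diverges we have $H_x\to\infty$, and therefore $\pi(x)/x\to 0$, which is precisely the claim about the density.

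For the assertion about gaps, I would let $p_n$ denote the $n$th prime and write the average of the first $n-1$ consecutive gaps as
\[
g_n=\frac{1}{n-1}\sum_{k=1}^{n-1}\bigl(p_{k+1}-p_k\bigr)=\frac{p_n-p_1}{n-1}=\frac{p_n-2}{n-1}.
\]
The elementary identity $\pi(p_n)=n$ lets me specialize the previous paragraph to $x=p_n$, giving $n/p_n=\pi(p_n)/p_n\to 0$ as $n\to\infty$, so $p_n/n\to\infty$ and hence $g_n\to\infty$. Thus the average prime gap grows without bound, which is the second assertion.

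There is essentially no genuine obstacle here; the argument is pure bookkeeping once Theorem~\ref{thm1} is in hand. The only two points that warrant a line of care are the convention that $H_x$ denotes $H_{\lfloor x\rfloor}$ for non-integer $x$ (so that $\pi(x)/x=\bigl(1+o(1)\bigr)H_{\lfloor x\rfloor}^{-1}$ still tends to $0$ because $H_{\lfloor x\rfloor}\to\infty$), and the translation between the ``density'' and the ``gaps'' formulations, which rests solely on $\pi(p_n)=n$ together with $p_n\to\infty$.
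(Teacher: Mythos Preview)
Your argument is correct and follows the same route as the paper: from $\pi(x)\sim HM(x)=x/H_x$ and the divergence of $H_x$ you conclude $\pi(x)/x\to 0$, which is exactly the paper's reasoning phrased as $HM(x)/x\to 0$. The paper treats the ``average prime gaps'' clause as a rephrasing of the density statement rather than a separate assertion, so your additional paragraph on $g_n=(p_n-2)/(n-1)$ is extra rigor not present in the original, but it is entirely sound.
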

\begin{proof}
The harmonic mean of a collection of values tends to stay close to the minimum value. In fact, $\frac{HM(x)}{x}$ approaches 0 as $x$ approaches $\infty$. From these well known facts about the harmonic mean we conclude that since $\pi(x)$ is asymptotically equivalent to $HM(x)$, the quotient $\frac{\pi(x)}{x}$ must also approach 0 as $x$ approaches $\infty$.
\end{proof}

\noindent After we have discovered how the prime counting function $\pi(x)$ relates to the harmonic mean $HM(x)$ for large values of $x,$ we now investigate how these two quantities relate for all integers $x\geq 2$ by means of bounding coefficients.
\\
\begin{theorem}
\label{thm11}
The inequality
\[\frac{1}{6}<\frac{\pi(x)}{\frac{x}{H_x}}<6+\frac{2}{3}\]
holds for any integer $x\geq 2$.
\end{theorem}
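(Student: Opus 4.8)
The plan is to write the quantity to be bounded as
\[
\frac{\pi(x)}{x/H_x}=\frac{\pi(x)\,H_x}{x}=\frac{\pi(x)\log x}{x}\cdot\frac{H_x}{\log x},
\]
and to control the two factors separately. Theorem~\ref{thm1} already tells us the left-hand side tends to $1$, but that is an asymptotic statement and gives no effective control for a fixed finite $x$, so something uniform is needed. For the second factor, comparing $H_x=\sum_{k=1}^{x}1/k$ with $\int_{1}^{x}dt/t$ from above and below yields $\log x<H_x<1+\log x$ for every integer $x\ge 2$, hence
\[
1<\frac{H_x}{\log x}<1+\frac{1}{\log x}\le 1+\frac{1}{\log 2}<\frac{5}{2}.
\]
For the first factor I would invoke the classical Chebyshev bounds: there are absolute constants $0<c_1<1<c_2$ with $c_1\,x/\log x<\pi(x)<c_2\,x/\log x$ for all $x\ge 2$, which one can either reprove by the standard elementary argument on the prime factorisation of $\binom{2n}{n}$, or simply cite in the explicit Rosser--Schoenfeld form $\pi(x)<1.25506\,x/\log x$ for $x>1$ and $\pi(x)>x/\log x$ for $x\ge 17$.

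Combining the two estimates, on the range where the Chebyshev bounds apply we get
\[
c_1<\frac{\pi(x)\,H_x}{x}<c_2\Bigl(1+\frac{1}{\log 2}\Bigr),
\]
and the target window $\bigl(\tfrac16,\tfrac{20}{3}\bigr)$ is loose enough that every standard constant makes the left side exceed $\tfrac16$ and the right side fall short of $\tfrac{20}{3}$; in fact for integers $x\ge 2$ the ratio $\pi(x)H_x/x$ stays well inside $[1/2,2]$, with minimum $3/4$ at $x=2$, so the inequality has a great deal of slack. It then remains only to dispose of the finitely many small values of $x$ not covered by the chosen Chebyshev estimate (e.g.\ $2\le x\le 16$ if the Rosser--Schoenfeld lower bound is used), and this is a direct numerical check: one tabulates $\pi(x)H_x/x$ on that range, observes it is smallest at $x=2$ where it equals $3/4$ and never climbs much above $3/2$, both safely interior to $\bigl(\tfrac16,\tfrac{20}{3}\bigr)$.

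The only genuine obstacle is the upper Chebyshev estimate $\pi(x)=O(x/\log x)$. A trivial counting bound such as $\pi(x)\le(x+1)/2$ is useless here, since after multiplication by $H_x/x$ it still leaves a factor growing like $\log x$; cancellation between $\pi(x)/x\asymp 1/\log x$ and $H_x\asymp\log x$ is essential, so one cannot avoid a Chebyshev-strength input in the upper direction. Everything else -- the harmonic-number comparison, the lower bound, and the finite verification -- is routine, and because the numerical constants $\tfrac16$ and $6+\tfrac23$ are so generous, no careful optimisation of the Chebyshev constants is required.
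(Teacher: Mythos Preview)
Your approach is correct and uses the same two ingredients as the paper: a Chebyshev-type bound on $\pi(x)\log x/x$ and the comparison $\log x<H_x\le 1+\log x$. The paper, however, combines them additively rather than multiplicatively and cites Chebyshev's inequality in the classical form $\tfrac16\,x/\pi(x)<\log x<6\,x/\pi(x)$, valid for all $x\ge 2$; from $H_x\le 1+\log x<1+6x/\pi(x)$ and $H_x\ge\gamma+\log x>\tfrac16\,x/\pi(x)$ one divides by $x/\pi(x)$ and bounds the leftover $\pi(x)/x$ by $\pi(3)/3=2/3$. This has two advantages over your route: it explains where the specific constants $\tfrac16$ and $6+\tfrac23$ come from (they are inherited directly from Chebyshev's $\tfrac16$ and $6$, plus the trivial $\pi(x)/x\le 2/3$), and it needs no separate finite check because the classical Chebyshev inequality already covers $x\ge 2$. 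Your multiplicative splitting with Rosser--Schoenfeld constants works just as well numerically, but treats the target bounds as given rather than derived.
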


\begin{proof}
\hypertarget{pf2.2}{}
Chebyshev's inequality \cite{rassias} states that
\[\frac{1}{6}\cdot\frac{x}{\pi(x)} < \log{x} < 6\cdot\frac{x}{\pi(x)}.\]
Moreover, since 1 is the maximum value of $H_x -\log{x}$ and by \eqref{eulergamma} we have
\[0<\gamma\leq H_x-\log{x}\leq 1\]
for any $x\geq 2.$ This gives
\begin{flalign*}
\frac{1}{6}\cdot\frac{x}{\pi(x)}&\leq\gamma+\frac{1}{6}\cdot\frac{x}{\pi(x)}\leq\gamma+\log{x}\leq H_x \\
&\leq 1+\log{x}<1+6\cdot\frac{x}{\pi(x)}.
\end{flalign*}
Dividing by $\frac{x}{\pi(x)}$ provides
\[\frac{1}{6}\leq H_x \cdot\frac{\pi(x)}{x}\leq\frac{\pi(x)}{x}+6\leq\frac{\pi(3)}{3}+6=6+\frac{2}{3}.\]
\end{proof}

\noindent Although these bounds are slightly weaker than the ones from Chebyshev's inequality, this approximation has the advantage that it uses harmonic means instead of logarithms, and is thus computable using only elementary operations. As we have made extensive use of the harmonic mean, we shall now investigate means in general.
\newpage

\section{Hölder Means}\label{pyth}

Since antiquity the question has been around of how to find the mean value for a given collection of values. There is a multitude of means or averages, the three most well-known being the three Pythagorean means, consisting of the\\\\
Arithmetic Mean
\[AM(x_1,...,x_n)=\frac{x_1+...+x_n}{n}=\frac{1}{n}\sum \limits_{i=1}^{n} x_i,\]
the Geometric Mean
\[GM(x_1,...,x_n)=\sqrt[n]{x_1\cdot\cdot\cdot\cdot\cdot x_n}=\sqrt[n]{\prod\limits_{i=1}^{n} x_i},\]
and the Harmonic Mean
\[HM(x_1,...,x_n)=\frac{n}{\frac{1}{x_1}+...+\frac{1}{x_n}}=\frac{n}{\sum \limits_{i=1}^{n} \frac{1}{x_i}}.\]
Generalization yields the power mean \cite{powermean} or Hölder mean, after the German mathematician Otto Ludwig Hölder (1859-1937) \cite{holder}
\[
	M_m(x_1,...,x_n)=\sqrt[m]{\frac{1}{n}\sum\limits_{i=1}^{n} x_i^{m}} \hspace*{5mm}\text{ for any } m \in \mathbb{R}\setminus \{0\},
\]
where $x_i$ are positive real numbers. Considering the special cases $m=1$ and $m=-1$ we obtain the arithmetic and harmonic mean, respectively.
\begin{theorem}
The geometric mean is obtained by taking the limit of the power mean as $m$ approaches $0.$
\end{theorem}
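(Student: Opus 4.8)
The plan is to pass to logarithms and reduce the claim to an elementary limit. Writing $G \coloneqq GM(x_1,\dots,x_n)$, it suffices to show that $\log M_m(x_1,\dots,x_n) \to \log G$ as $m \to 0$, since $\exp$ is continuous and everything in sight is positive. Now
\[
\log M_m(x_1,\dots,x_n) = \frac{1}{m}\log\!\left(\frac{1}{n}\sum_{i=1}^{n} x_i^{m}\right),
\]
and as $m \to 0$ the argument of the outer logarithm tends to $\frac{1}{n}\sum_{i=1}^n 1 = 1$, so the numerator tends to $0$ while the denominator $m$ tends to $0$ as well. This is a $0/0$ indeterminate form, and the whole problem becomes the computation of this single limit.

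First I would treat $m$ as a continuous real variable and apply L'Hôpital's rule. Using $\frac{d}{dm} x_i^{m} = x_i^{m}\log x_i$, the derivative of the numerator is
\[
\frac{d}{dm}\log\!\left(\frac{1}{n}\sum_{i=1}^{n} x_i^{m}\right) = \frac{\frac{1}{n}\sum_{i=1}^{n} x_i^{m}\log x_i}{\frac{1}{n}\sum_{i=1}^{n} x_i^{m}},
\]
while the derivative of the denominator is $1$. Evaluating the quotient at $m = 0$ gives $\frac{1}{n}\sum_{i=1}^{n}\log x_i$, which equals $\log\!\left(\prod_{i=1}^{n} x_i\right)^{1/n} = \log G$. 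Hence $\log M_m \to \log G$, and therefore $M_m \to G$, as desired. An alternative that avoids L'Hôpital entirely is to expand $x_i^{m} = e^{m\log x_i} = 1 + m\log x_i + O(m^2)$, so that $\frac{1}{n}\sum x_i^{m} = 1 + \tfrac{m}{n}\sum\log x_i + O(m^2)$, and then $\log\!\left(1 + \tfrac{m}{n}\sum\log x_i + O(m^2)\right) = \tfrac{m}{n}\sum\log x_i + O(m^2)$; dividing by $m$ and letting $m \to 0$ yields the same value $\tfrac{1}{n}\sum\log x_i$.

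I do not expect a serious obstacle here; the only points requiring a little care are verifying that the hypotheses of L'Hôpital's rule genuinely hold (the differentiability of $m \mapsto \frac{1}{n}\sum x_i^m$ on a neighbourhood of $0$ and the non-vanishing of that sum, both clear since each $x_i > 0$), and noting that it is legitimate to compute a two-sided limit in the continuous variable $m$ rather than only along integers. If one prefers the Taylor-expansion route, the mild technical point is to keep the error terms uniform, which is immediate because the sum is finite. Either way, the continuity of the exponential function then transfers the limit from $\log M_m$ back to $M_m$ itself, completing the proof.
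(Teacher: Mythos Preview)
Your proof is correct and follows essentially the same approach as the paper: pass to logarithms, recognise the $0/0$ form, apply L'H\^opital's rule to obtain $\frac{1}{n}\sum_i \log x_i$, and transfer back via the continuity of $\exp$. Your version is more streamlined (you evaluate the L'H\^opital quotient directly at $m=0$ rather than performing the paper's intermediate algebraic rewriting), and you additionally offer a Taylor-expansion alternative, but the core argument is identical.
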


\begin{proof}
Since we are trying to relate a product to a sum, it seems natural to make use of the properties of the logarithm and the exponential function. \\
By definition,
\begin{flalign*}
M_m(x_1,...,x_n)&=\sqrt[m]{\frac{1}{n}\sum\limits_{i=1}^{n} x_i^{m}}
=\Bigg({\frac{1}{n}\sum\limits_{i=1}^{n} x_i^{m}}\Bigg)^{\frac{1}{m}}\\
&=\exp{\log{\Bigg({\frac{1}{n}\sum\limits_{i=1}^{n} x_i^{m}}\Bigg)^{\frac{1}{m}}}}
=\exp{\frac{\log{\Bigg({\frac{1}{n}\sum\limits_{i=1}^{n} x_i^{m}}\Bigg)}}{m}}.
\end{flalign*}
Now we take the limit as $m$ approaches $0$:
\[
	\lim_{m \rightarrow 0}M_m(x_1,...,x_n)=\lim_{m \rightarrow 0}\exp{\frac{\log{\Bigg({\frac{1}{n}\sum\limits_{i=1}^{n} x_i^{m}}\Bigg)}}{m}}.
\]
Since $\exp{}$ is continuous, we can take it outside of the limit:
\begin{equation}
	\label{LDrei}
	\lim_{m \rightarrow 0}M_m(x_1,...,x_n)=\exp{\lim_{m \rightarrow 0}\frac{\log{\Bigg({\frac{1}{n}\sum\limits_{i=1}^{n} x_i^{m}}\Bigg)}}{m}}.
\end{equation}
Now we have the exponential of a limit on the RHS. Let us define
\[
	L\colon=\lim_{m \rightarrow 0}\frac{\log{\Bigg({\frac{1}{n}\sum\limits_{i=1}^{n} x_i^{m}}\Bigg)}}{m}.
\]
Since $\log{}$ is differentiable on $(0,\infty)$, and we have a $\frac{0}{0}$ situation, we can apply L'Hôpital's rule to obtain
\[
	L=\lim_{m \rightarrow 0}\frac{\frac{d}{dm}\Bigg(\log{\Bigg({\frac{1}{n}\sum\limits_{i=1}^{n} x_i^{m}}\Bigg)}\Bigg)}{\frac{d}{dm}(m)}.
\]
Using the chain rule gives
\begin{flalign*}
L&=\lim_{m \rightarrow 0}\frac{\frac{d}{dm}\Bigg(\frac{1}{n}\sum\limits_{i=1}^{n} x_i^{m}\Bigg)}{1\cdot\frac{1}{n}\sum\limits_{j=1}^{n} x_j^{m}}=\lim_{m \rightarrow 0}\frac{\frac{1}{n}\sum\limits_{i=1}^{n} \big(x_i^{m}\cdot\log{x_i}\big)}{1\cdot\frac{1}{n}\sum\limits_{j=1}^{n} x_j^{m}} \\
&=\lim_{m \rightarrow 0}\frac{\sum\limits_{i=1}^{n} \big(x_i^{m}\cdot\log{x_i}\big)}{\sum\limits_{j=1}^{n} x_j^{m}}.
\end{flalign*}
Since the quotient of a sum is equal to the sum of quotients, where the denominator $\neq 0$, we can rewrite $L$ as 
\[
	L=\lim_{m \rightarrow 0}\sum\limits_{i=1}^{n}\frac{ (x_i^{m}\cdot\log{x_i})}{\sum\limits_{j=1}^{n} x_j^{m}}.
\]
Expanding every term of the sum by $\frac{1}{x_i^{m}}$ gives
\[
	L=\lim_{m \rightarrow 0}\sum\limits_{i=1}^{n}\frac{\log{x_i}}{\frac{1}{x_i^{m}}\sum\limits_{j=1}^{n} x_j^{m}}
	=\lim_{m \rightarrow 0}\sum\limits_{i=1}^{n}\frac{\log{x_i}}{\sum\limits_{j=1}^{n}\frac{x_j^{m}}{x_i^{m}}}.
\]
Since only the denominator is dependent on $m$, we can equivalently write
\[
	L=\sum\limits_{i=1}^{n}\frac{\log{x_i}}{\displaystyle\lim_{m \rightarrow 0}\sum\limits_{j=1}^{n}\frac{x_j^{m}}{x_i^{m}}}.
\]
We note that all $x_j, x_i$ are positive and $x^{0}=1$ for $x\neq 0$, hence $\displaystyle\lim_{m\rightarrow 0}\frac{x_j^{m}}{x_i^{m}}=1$ for any $1\leq i, j\leq n$ and thus
\[\lim_{m \rightarrow 0}\sum\limits_{j=1}^{n}\frac{x_j^{m}}{x_i^{m}}=n,\]
and therefore
\[
	L=\sum\limits_{i=1}^{n}\frac{\log{x_i}}{n}
	=\sum\limits_{i=1}^{n}\log\Big({x_i^{\frac{1}{n}}\Big)}.
\]
Since $\exp{}$ is continuous, we can resubstitute this expression for $L$ into \eqref{LDrei} to get
\[
	\lim_{m \rightarrow 0}M_m(x_1,...,x_n)=\exp{\lim_{m \rightarrow 0}\frac{\log{\Bigg({\frac{1}{n}\sum\limits_{i=1}^{n} x_i^{m}}\Bigg)}}{m}}
	=\exp{L}
	=\exp{\sum\limits_{i=1}^{n}\log\Big({x_i^{\frac{1}{n}}\Big)}}.
\]
By the very intrinsic property of the exponential function to convert multiplication into addition, we can conclude that
\[
	\lim_{m \rightarrow 0}M_m(x_1,...,x_n)=\prod\limits_{i=1}^{n} x_i^{\frac{1}{n}}.\]
\end{proof}
Our proof can be found in \cite{powermean}. Let us now define the function
\[
	M_m(x)\coloneqq M_m(1,...,x)=M_m(x_1,...,x_n),
\]
where $x_i=i\in\mathbb{N}$ for $1\leq i\leq x\in\mathbb{N}$.
We now investigate how $M_m(x)$ behaves as a function of $m$ by means of series expansion when $x$ gets arbitrarily large.
In order to distinguish this power mean function more clearly from the power mean of an arbitrary collection of values, and to avoid possible confusion with imaginary numbers, we use $k$ for the index instead of $i$.
Firstly, we again consider the Pythagorean means:
\begin{flalign*}
M_1(x)&=AM(x)=\frac{1}{x}\sum \limits_{k=1}^{x} k=\frac{1}{x}\cdot\frac{x(x+1)}{2}=\frac{x}{2}+\frac{1}{2},\\
M_0(x)&=GM(x)=\sqrt[x]{\prod\limits_{k=1}^{x}k}=\sqrt[x]{x!}, \\
M_{-1}(x)&=HM(x)=\frac{x}{\sum \limits_{k=1}^{x} \frac{1}{k}}=\frac{x}{H_x}.
\end{flalign*}
Obviously, $M_1(x)\sim\frac{x}{2}+\frac{1}{2}$. 
For the geometric mean, we use the computer to find the Puiseux series expansion at $x=\infty$: \\
$M_0(x)=\Bigg(e^{-x}x^{x}\Bigg(\sqrt{2\pi}\sqrt{x}+\frac{1}{6}\sqrt{\frac{\pi}{2}}\sqrt{\frac{1}{x}}+\frac{1}{144}\sqrt{\frac{\pi}{2}}\Big(\frac{1}{x}\Big)^{3/2}-\frac{139\sqrt{\frac{\pi}{2}}\big(\frac{1}{x}\big)^{5/2}}{25920}-\frac{571\sqrt{\frac{\pi}{2}}\big(\frac{1}{x}\big)^{7/2}}{1244160}\newline+\frac{103879\sqrt{\frac{\pi}{2}}\big(\frac{1}{x}\big)^{9/2}}{104509440}+O\bigg(\Big(\frac{1}{x}\Big)^{11/2}\bigg)\Bigg)\Bigg)^{1/x}.$ \\
For the sake of simplicity, we remove all terms whose exponent of x is fixed, since the $x$th root of those will quickly eradicate them. Thus we can write the following asymptotic equivalence relation:
\[M_0(x)\sim\Big(e^{-x}x^{x}\Big)^{1/x}=\frac{x}{e}.\]
Trivially, $M_{-1}(x)\sim\frac{x}{H_x}$. 
But the generalized Puiseux series only allows combinations of (fractional) powers \cite{puiseux} and logarithms besides elementary arithmetic, so we write, again assisted by the computer,
\[M_{-1}(x)=\frac{x}{\log{(x)}+\gamma}-\frac{1}{2(\log{(x)}+\gamma)^{2}}+\frac{\log{(x)}+\gamma+2}{12x(\log{(x)}+\gamma)^{3}}+O\Bigg(\bigg(\frac{1}{x}\bigg)^{2}\Bigg).\]
Again we ignore the terms that go to $0$:
\[M_{-1}(x)\sim\frac{x}{\log{(x)}+\gamma},\]
which easily follows from the definition of $\gamma$. \\
We now examine the cases of $M_m(x)$, where $m\notin\{-1,0,1\}$.
Let us first observe what happens when $m=2,$ using the formula \cite{bonacci} for square pyramidal numbers:
\begin{flalign*}
\lim_{x \rightarrow\infty}M_{2}(x)&=\lim_{x \rightarrow\infty}\sqrt{\frac{1}{x}\sum\limits_{k=1}^{x}k^{2}}=\lim_{x \rightarrow\infty}\sqrt{\frac{1}{x}\cdot\frac{x(x+1)(2x+1)}{6}}\\
&=\lim_{x \rightarrow\infty}\sqrt{\frac{(x+1)(2x+1)}{6}}.
\end{flalign*}
The Laurent series expansion at $x=\infty$ gives 
\[M_2(x)=\frac{x}{\sqrt[2]{3}}+\frac{\sqrt[2]{3^{1}}}{4}+O\bigg(\frac{1}{x}\bigg).\]
Proceeding similarly, using the sum of power identities we obtain for large $x$
\begin{flalign*}
M_3(x)&=\frac{x}{\sqrt[3]{4}}+\frac{\sqrt[3]{4^{2}}}{6}+O\bigg(\frac{1}{x}\bigg), \\
M_4(x)&=\frac{x}{\sqrt[4]{5}}+\frac{\sqrt[4]{5^{3}}}{8}+O\bigg(\frac{1}{x}\bigg),
\end{flalign*}
\[\cdots\]
and by continuing the pattern we end up with
\begin{equation}
\label{M_mPos} M_m(x)=\frac{x}{\sqrt[m]{m+1}}+\frac{m+1}{2m\cdot \sqrt[m]{m+1}}+O\bigg(\frac{1}{x}\bigg)\text{ for all }m\geq 2, m\in\mathbb{N}.
\end{equation}
\noindent Obviously, $M_m(x)=\frac{x}{1}+\frac{1}{2}+O\bigg(\frac{1}{x}\bigg)$.
Let us now investigate how $\displaystyle\lim_{x \rightarrow\infty} M_m(x)$ behaves when $m\leq-2$ is a negative integer.
For $m=-2$, we get
\[M_{-2}(x)=\sqrt[-2]{\frac{1}{x}\sum\limits_{k=1}^{x}k^{-2}}=\sqrt[2]{\frac{x}{\sum\limits_{k=1}^{x}\frac{1}{k^{2}}}}=\sqrt[2]{\frac{x}{H_x^{(2)}}}.\]
Puiseux series expansion at $x=\infty$ then yields
\begin{flalign*}
M_{-2}(x)&=\frac{\sqrt{6}\sqrt{x}}{\pi}+O\Bigg(\sqrt{\frac{1}{x}}\Bigg) \\
&=\frac{\sqrt[2]{x}}{\sqrt[2]{\zeta(2)}}+O\Bigg(\sqrt{\frac{1}{x}}\Bigg).
\end{flalign*}
Analogously we obtain
\[M_{-3}(x)=\frac{\sqrt[3]{x}}{\sqrt[3]{\zeta(3)}}+O\Bigg(\sqrt{\frac{1}{x}}\Bigg),\] and
\begin{flalign*}
M_{-4}(x)&=\frac{\sqrt{3}\sqrt[4]{10}\sqrt[4]{x}}{\pi}+O\Bigg({\bigg(\frac{1}{x}\bigg)^{11/4}}\Bigg) \\
&=\frac{\sqrt[4]{x}}{\sqrt[4]{\zeta(4)}}+O\Bigg({\bigg(\frac{1}{x}\bigg)^{11/4}}\Bigg),
\end{flalign*}
and continuing the pattern,
\begin{equation}
	\label{M_mNeg}
	M_m(x)=\frac{\sqrt[-m]{x}}{\sqrt[-m]{\zeta(-m)}}+o(1)\text{ for all } m\leq -2\, ,m\in\mathbb{Z},
\end{equation}
and $\displaystyle\lim_{m\rightarrow-\infty}M_m(x)=1$ for any $x\in\mathbb{N}$.
We see that the equations $\eqref{M_mPos}$ and $\eqref{M_mNeg}$ are very different.
\newpage

\subsection{Approaching the harmonic mean}\label{aphar}

We have found out that the power mean $M_m(x)$ is dominated by powers of $x$ when $m<-1$ and by $x$ when $m>-1$. At $m=-1,$ however, there is a sudden shift in the Puiseux series to $\frac{x}{\log{x}}$. We are thus interested in analyzing how precisely this shift occurs. Taking the limit from just one side would not work very well, since it would just blow off, so we use a trick à la Cauchy principal value: Let us define 
\[F(x, \epsilon)\coloneqq\frac{M_{-1+\epsilon}(x)+M_{-1-\epsilon}(x)}{2}\] for $\epsilon>0$. Then we get \[HM(x)\sim\lim_{\epsilon\rightarrow 0} F(x, \epsilon).\]

We thus need a sequence that goes to 0. Let's take $\frac{1}{n}$ with $n\in\mathbb{N}$ and $n\geq 2.$ Then we obtain
\begin{flalign*}
F\bigg(x, \frac{1}{2}\bigg)&=\frac{1}{2\bigg(\frac{H_x^{(1/2)}}{x}\bigg)^{2}}+\frac{1}{2\bigg(\frac{H_x^{(2/3)}}{x}\bigg)^{2/3}},\\
F\bigg(x, \frac{1}{3}\bigg)&=\frac{1}{2\bigg(\frac{H_x^{(2/3)}}{x}\bigg)^{3/2}}+\frac{1}{2\bigg(\frac{H_x^{(4/3)}}{x}\bigg)^{3/4}},\\
F\bigg(x, \frac{1}{4}\bigg)&=\frac{1}{2\bigg(\frac{H_x^{(3/4)}}{x}\bigg)^{4/3}}+\frac{1}{2\bigg(\frac{H_x^{(5/4)}}{x}\bigg)^{4/5}},\\
F\bigg(x, \frac{1}{5}\bigg)&=\frac{1}{2\bigg(\frac{H_x^{(4/5)}}{x}\bigg)^{5/4}}+\frac{1}{2\bigg(\frac{H_x^{(6/5)}}{x}\bigg)^{5/6}},
\end{flalign*}
where $H_x^{(m)}$ denotes the generalized harmonic number. After applying Puiseux series expansion at $x=\infty$, we obtain
\begin{flalign*}
F\bigg(x, \frac{1}{2}\bigg)&=\frac{x}{\sqrt[1]{8}}+\frac{x^{2/3}}{2\,\zeta\big(\frac{3}{2}\big)^{2/3}}-\frac{x^{1/2}}{8\,\zeta\big(\frac{1}{2}\big)^{-1}}+\frac{x^{1/6}}{\frac{3}{2}\,\zeta\big(\frac{3}{2}\big)^{5/3}}+\frac{3\,\zeta\big(\frac{1}{2}\big)^{2}-2}{32}+O\Bigg(\sqrt{\frac{1}{x}}\,\Bigg),\\
F\bigg(x, \frac{1}{3}\bigg)&=\frac{x}{\sqrt[2]{108}}+\frac{x^{3/4}}{2\,\zeta\big(\frac{4}{3}\big)^{3/4}}-\frac{x^{2/3}}{12\sqrt{3}\,\zeta\big(\frac{2}{3}\big)^{-1}}+\frac{x^{5/12}}{\frac{8}{9}\,\zeta\big(\frac{4}{3}\big)^{7/4}}+O\Big(\sqrt[3]{x}\Big),\\
F\bigg(x, \frac{1}{4}\bigg)&=\frac{x}{\sqrt[3]{2048}}+\frac{x^{4/5}}{2\,\zeta\big(\frac{5}{4}\big)^{4/5}}-\frac{x^{3/4}}{24\cdot2^{2/3}\,\zeta\big(\frac{3}{4}\big)^{-1}}+\frac{x^{11/20}}{\frac{5}{8}\,\zeta\big(\frac{5}{4}\Big)^{9/5}}+O\big(\sqrt{x}\,\Big),\\
F\bigg(x, \frac{1}{5}\bigg)&=\frac{x}{\sqrt[4]{50000}}+\frac{x^{5/6}}{2\,\zeta\big(\frac{6}{5}\big)^{5/6}}-\frac{x^{4/5}}{40\,\sqrt[4]{5}\,\zeta\big(\frac{4}{5}\big)^{-1}}+\frac{x^{19/30}}{\frac{12}{25}\,\zeta\big(\frac{6}{5}\big)^{11/6}}+O\Big(x^{3/5}\Big),
\end{flalign*}
where the coefficients for the first term form an interesting sequence. Assisted by the OEIS \cite{chebypoly}, we can write for positive integers $n\geq 2$

\[F\bigg(x, \frac{1}{n}\bigg)=\frac{x}{2\cdot n^{\frac{n}{n-1}}}+o(x).\]

\newpage

\subsection{Unexpected Bernoulli numbers}\label{brnuli}

After we have let $x$ go to $\infty$, the question naturally arises, what if $x\rightarrow 0$? Of course this does not make a lot of sense, since $M_m(x)$ by definition is the mean of all positive integers from 1 to $x$, and there are no positive integers between 1 and 0. Nevertheless, it turns out that by expanding the formulae that we have obtained beyond their original confines, we are able to delve deeper into the nature of numbers. Let's start with the Pythagorean means. We have established that $AM(x)=\frac{x+1}{2}$. Plugging in 0 for $x$, pretending we are allowed to do that, we obtain $AM(0)=\frac{1}{2}$.
By definition, $GM(x)=\sqrt[x]{x!}=\sqrt[x]{\Gamma(x+1)}$. Taking the limit as $x\rightarrow 0$, we get $e^{-\gamma}$.
\newline Analogously, we obtain $\displaystyle\lim_{x \rightarrow 0} HM(x)=\lim_{x \rightarrow 0}\frac{x}{H_x}=\frac{6}{\pi^{2}}=\frac{1}{\zeta(2)}$. We have thus already found three beautiful numbers which are at the very core of number theory: one half, the inverse of the exponential of $\gamma$, and the reciprocal of $\zeta(2)$. Continuing, we use the definition of the power mean $M_m(x)$ for positive integer values of $m\geq 2$:
\begin{flalign*}
M_2(0)&=\lim_{x \rightarrow 0}\sqrt{\frac{1}{x}\cdot\frac{x(x+1)(2x+1)}{6}}=\frac{1}{\sqrt{6}},\\
M_3(0)&=\lim_{x \rightarrow 0}\sqrt[3]{\frac{1}{x}\cdot\frac{x^{2}(x+1)^{2}}{4}}=0,\\
M_4(0)&=\lim_{x \rightarrow 0}\sqrt[4]{\frac{1}{x}\cdot\frac{x(x+1)(2x+1)(3x^{2}+3x-1)}{30}}=\frac{1}{\sqrt[4]{-30}}.
\end{flalign*}
Proceeding in the same fashion we obtain 0 for $m=2n-1$ and the $2n$th root of a fraction for $m=2n$, which is real for $m=4n-2$ and imaginary for $m=4n$. Generalizing, we realize that these numbers are exactly of the form $\sqrt[m]{B_m}$, i.e.
\begin{equation}
	\label{MeanZeroPos}
	M_m(0)=\sqrt[m]{B_m}\text{ for } m\geq 2, \, m\in\mathbb{Z},
\end{equation}
where $B_m$ denotes the $m$th Bernoulli number \cite{brnuli}.
\newline Note that $B_m=-m\,\zeta(-m+1)$ for integers $m\geq 2.$ Considering negative integer values $m\leq -2$, we get
\begin{flalign*}
M_{-2}(0)&=\lim_{x \rightarrow 0}\sqrt[2]{\frac{x}{H_x^{(2)}}}=\frac{1}{\sqrt[2]{2\,\zeta(3)}},\\
M_{-3}(0)&=\lim_{x \rightarrow 0}\sqrt[3]{\frac{x}{H_x^{(3)}}}=\frac{1}{\sqrt[3]{3\,\zeta(4)}},\\
M_{-4}(0)&=\lim_{x \rightarrow 0}\sqrt[4]{\frac{x}{H_x^{(4)}}}=\frac{1}{\sqrt[4]{4\,\zeta(5)}}.
\end{flalign*}
With the same argument it is easy to show that 
\begin{equation}
	\label{MeanZeroNeg}
	M_{m}(0)=\frac{1}{\sqrt[-m]{-m\,\zeta(-m+1)}}=\sqrt[m]{-m\,\zeta(1-m)}\text{ for }m\leq -2,\, m\in\mathbb{Z}.
\end{equation}
Consider again the pythagorean means. For the arithmetic mean, we write
\[AM(0)=M_1(0)=\frac{1}{2}=\sqrt[1]{-1\cdot\bigg(-\frac{1}{2}\bigg)}.\]
Note that $\sqrt[1]{z}=z \text{ for any } z\in\mathbb{R}$ and $\zeta(0)=-\frac{1}{2}$.
Hence,
\[M_1(0)=\sqrt[1]{(-1)\cdot\zeta(0)}=\sqrt[1]{(-1)\cdot\zeta(1-1)}.\]
Similarly we can derive
\[HM(0)=M_{-1}(0)=\frac{1}{\zeta(2)}=\sqrt[-1]{1\cdot\zeta(2)}=\sqrt[-1]{-(-1)\cdot\zeta(1-(-1))}.\]
Combining our overcomplicated expressions for $M_1(0)$ and $M_{-1}(0)$ with $\eqref{MeanZeroPos}$ and $\eqref{MeanZeroNeg}$, we write
\[M_{m}(0)=\sqrt[m]{-m\,\,\zeta(1-m)}\text{ for }m\neq 0,\, m\in\mathbb{Z}.\]
This leaves us with the case $m=0$, the geometric mean. As usual when dealing with this mean, we take the limit:
\[\lim_{m\rightarrow 0}M_{m}(0)=\lim_{m\rightarrow 0}\sqrt[m]{-m\,\,\zeta(1-m)}.\]
We claim
\begin{theorem}
\label{thm4.2}
$\displaystyle L\coloneqq\lim_{m\rightarrow 0}\sqrt[m]{-m\,\,\zeta(1-m)}=e^{-\gamma}$.
\end{theorem}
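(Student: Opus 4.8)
The plan is to treat $L$ as a $1^{\infty}$ indeterminate form and pass to logarithms. Set $f(m) \coloneqq -m\,\zeta(1-m)$, so that $\sqrt[m]{-m\,\zeta(1-m)} = f(m)^{1/m} = \exp\!\big(\tfrac{1}{m}\log f(m)\big)$ and $L = \exp\!\big(\lim_{m\to 0}\tfrac{\log f(m)}{m}\big)$ once that limit is shown to exist. To understand $f$ near $m=0$ one cannot expand $\zeta(1-m)$ directly, since $\zeta$ has its pole there; instead I would invoke Riemann's functional equation in the asymmetric form
\[
 \zeta(1-s) = 2(2\pi)^{-s}\cos\!\Big(\tfrac{\pi s}{2}\Big)\Gamma(s)\,\zeta(s)
\]
with $s = m$. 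Then $m\Gamma(m) = \Gamma(m+1)$ gives
\[
 f(m) = -2(2\pi)^{-m}\cos\!\Big(\tfrac{\pi m}{2}\Big)\Gamma(m+1)\,\zeta(m),
\]
whose right-hand side is real-analytic at $m=0$; evaluating with $\Gamma(1)=1$, $\cos 0 = 1$, $(2\pi)^{0}=1$ and $\zeta(0) = -\tfrac12$ yields $f(0) = 1$, so $\log f(m)$ is defined and vanishes at $m=0$ and $\tfrac{\log f(m)}{m}$ is a genuine $\tfrac00$ form.

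Next, since $f$ is differentiable with $f(0) = 1 \neq 0$, L'Hôpital's rule (applied exactly as in the proof that the power mean tends to the geometric mean) gives $\lim_{m\to 0}\tfrac{\log f(m)}{m} = \tfrac{f'(0)}{f(0)} = f'(0)$, hence $L = e^{f'(0)}$. It then remains to compute $f'(0)$, which I would extract from the logarithmic derivative of the product above:
\[
 \frac{f'(m)}{f(m)} = -\log(2\pi) - \frac{\pi}{2}\tan\!\Big(\tfrac{\pi m}{2}\Big) + \psi(m+1) + \frac{\zeta'(m)}{\zeta(m)},
\]
where $\psi \coloneqq \Gamma'/\Gamma$. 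Setting $m=0$ and inserting the standard special values $\psi(1) = -\gamma$ and $\zeta'(0) = -\tfrac12\log(2\pi)$ — so that $\zeta'(0)/\zeta(0) = \log(2\pi)$ — the term $\tan 0$ vanishes and the two $\log(2\pi)$ contributions cancel, leaving $f'(0)/f(0) = -\gamma$, i.e. $f'(0) = -\gamma$. Therefore $L = e^{-\gamma}$, which also matches the earlier value $\lim_{x\to 0}\sqrt[x]{\Gamma(x+1)} = e^{-\gamma}$ and thereby establishes the continuity of $m\mapsto M_m(0)$ at $m=0$.

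The only real obstacle is supplying the input data honestly: the value $\zeta'(0) = -\tfrac12\log(2\pi)$ is itself customarily obtained by differentiating the functional equation at $0$, so the cleanest write-up either records that short derivation or cites it rather than using it as a black box; and one should state explicitly that $\zeta(1-m)$ is handled only after multiplication by $-m$, which turns the pole into the analytic function displayed above. As a sanity check one can bypass $\zeta'(0)$ entirely by quoting the Laurent expansion $\zeta(s) = \frac{1}{s-1} + \gamma + O(s-1)$ about the simple pole at $s=1$: then $f(m) = -m\big(-\tfrac{1}{m} + \gamma + O(m)\big) = 1 - \gamma m + O(m^2)$, whence $\tfrac{\log f(m)}{m} \to -\gamma$ and $L = e^{-\gamma}$ once more. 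Since the paper advertises a proof by way of Riemann's functional equation, I would keep the functional-equation argument as the main line and relegate the Laurent-series computation to a confirming remark.
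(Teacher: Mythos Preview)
Your argument is correct and uses the same ingredients as the paper's proof: Riemann's functional equation to convert $-m\,\zeta(1-m)$ into a product involving $\Gamma(m+1)$ and $\zeta(m)$, L'H\^opital on the logarithm, and the special values $\zeta'(0)/\zeta(0)=\log(2\pi)$ and $\psi(1)=-\gamma$. The difference is purely organizational. The paper factors the $m$-th root as $\sqrt[m]{\Gamma(m+1)}$ times $\sqrt[m]{-2^{1-m}\pi^{-m}\sin\!\big(\tfrac{\pi(1-m)}{2}\big)\zeta(m)}$ and spends a separate lemma (with two parts, one for $\sqrt[x]{-2\zeta(x)}\to 2\pi$ and one for the cosine factor) showing the second factor tends to $1$; it then identifies the first factor with $GM(0)=e^{-\gamma}$, which it regards as already known. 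You instead keep $f(m)$ as a single analytic product and read off $f'(0)/f(0)=-\gamma$ in one line of logarithmic differentiation, which is cleaner and avoids having to justify the product-of-limits step. Your Laurent-series sanity check, $-m\,\zeta(1-m)=1-\gamma m+O(m^{2})$ directly from the expansion of $\zeta$ at its pole, is genuinely more elementary than anything in the paper and makes the functional equation unnecessary; since the paper's stated aim is to exhibit a proof via the functional equation, your instinct to keep that as the main line and demote the Laurent computation to a remark is the right call.
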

\begin{proof}
Consider the functional equation for the Riemann zeta function: \cite{riemn}
\begin{equation}
	\label{FunctionalZeta}
	\zeta(s)=2^{s}\,\pi^{s-1}\, \sin\bigg(\frac{\pi s}{2}\bigg)\,\Gamma(1-s)\,\zeta(1-s).
\end{equation}
First we need to prove the following

\begin{lemma}
\label{lem}
\[\lim_{x\rightarrow 0}\sqrt[x]{-2^{1-x}\,\pi^{-x}\sin\bigg(\frac{\pi\cdot(1-x)}{2}\bigg)\,\zeta(x)}=1.\] 
\end{lemma}
\begin{proof}
Part I. We show that 
\[\Lambda \coloneqq\lim_{x\rightarrow 0}\sqrt[x]{-2\,\zeta(x)}=2\pi.\]
Proof of part I.
\begin{flalign*}
\Lambda&=\lim_{x\rightarrow 0}\big(-2\,\zeta(x)\big)^{\frac{1}{x}} \\
&=\lim_{x\rightarrow 0} \exp{\log{\bigg(\big(-2\,\zeta(x)\big)^{\frac{1}{x}}\bigg)}} \\
&=\exp{\lim_{x\rightarrow 0} \log{\bigg(\big(-2\,\zeta(x)\big)^{\frac{1}{x}}\bigg)}}.
\end{flalign*}
Hence,
\begin{flalign*}
\log{\Lambda}&= \lim_{x\rightarrow 0} \log{\bigg(\big(-2\,\zeta(x)\big)^{\frac{1}{x}}\bigg)} \\
&=\lim_{x\rightarrow 0} \frac{1}{x}\Big(\log{(2)}+\log{(-\zeta(x)})\Big) \\
&=\lim_{x\rightarrow 0} \frac{\log{(2)}+\log{(-\zeta(x))}}{x}.
\end{flalign*}
Since the numerator and denominator are differentiable on $0<\abs{x}\leq 0.5$ and we have a $\frac{0}{0}$ situation, we can apply L'Hôpital's rule.
\begin{flalign*}
\log{\Lambda}&=\lim_{x\rightarrow 0}\frac{\frac{d}{dx}\big(\log{(2)}+\log{(-\zeta(x))}\big)}{\frac{d}{dx}\big(x\big)} \\
&=\lim_{x\rightarrow 0} \frac{\zeta^{'}(x)}{\zeta(x)} \\
&=\lim_{x\rightarrow 0} \frac{-\frac{1}{2}\log{(2\pi)}}{-\frac{1}{2}} \\
&=\log{(2\pi)}.
\end{flalign*}
Therefore,
\[\Lambda=2\pi.\]
\indent Part II. We show that
\[\lambda\coloneqq\lim_{x\rightarrow 0}\sqrt[x]{\sin\bigg(\frac{\pi\cdot(1-x)}{2}\bigg)}=1.\]
Proof of part II.
\begin{flalign*}
\lambda&=\lim_{x\rightarrow 0} \sqrt[x]{\cos{\frac{\pi x}{2}}} \\
&=\exp{\lim_{x\rightarrow 0}\frac{1}{x}\log{\cos{\frac{\pi x}{2}}}}. \\
\end{flalign*}
Taking the logarithm on both sides, we obtain
\[\log{\lambda}=\lim_{x\rightarrow 0}\frac{\log{\cos{\frac{\pi x}{2}}}}{x}.\]
Since the numerator and denominator are differentiable on $0<\abs{x}\leq 0.5$ and we have a $\frac{0}{0}$ situation, we can apply L'Hôpital's rule.
\begin{flalign*}
\log{\lambda}&=\lim_{x\rightarrow 0}\frac{\frac{d}{dx}\big(\log{\cos{\frac{\pi x}{2}}}\big)}{\frac{d}{dx}\big(x\big)} \\
&=\lim_{x\rightarrow 0}\frac{-\frac{1}{2}\pi\tan{\frac{\pi x}{2}}}{1} \\
&=0.
\end{flalign*}
Therefore,
\[\lambda=\exp{0}=1.\]
\indent Since $\Lambda$ and $\lambda$ both converge, we can finish the proof:
\begin{flalign*}
\lim_{x\rightarrow 0}\sqrt[x]{-2^{x-1}\,\pi^{-x}\sin\bigg(\frac{\pi\cdot(1-x)}{2}\bigg)\,\zeta(x)}&=\frac{1}{2\pi}\cdot\Lambda\cdot\lambda \\
&=\frac{1}{2\pi}\cdot 2\pi\cdot 1 \\
&=1.
\end{flalign*}
\end{proof}

\noindent Setting $s=1-m$ in \eqref{FunctionalZeta}, we obtain
\[\zeta(1-m)=2^{1-m}\,\pi^{-m}\, \sin\bigg(\frac{\pi\cdot(1-m)}{2}\bigg)\,\Gamma(m)\,\zeta(m)\]
and multiplying by $-m$ we get
\[-m\,\,\zeta(1-m)=-m\cdot2^{1-m}\,\pi^{-m}\, \sin\bigg(\frac{\pi\cdot(1-m)}{2}\bigg)\,\Gamma(m)\,\zeta(m).\]
Taking the $m$-th root and passing to the limit this gives
\begin{flalign*}
\lim_{m\rightarrow 0}\sqrt[m]{-m\,\,\zeta(1-m)}&=\lim_{m\rightarrow 0}\sqrt[m]{-m\cdot2^{1-m}\,\pi^{-m}\, \sin\bigg(\frac{\pi\cdot(1-m)}{2}\bigg)\,\Gamma(m)\,\zeta(m)} \\
&=\lim_{m\rightarrow 0}\sqrt[m]{\bigg(m\,\,\Gamma(m)\bigg)\cdot\bigg(-2^{1-m}\,\pi^{-m}\, \sin\bigg(\frac{\pi\cdot(1-m)}{2}\bigg)\,\zeta(m)\bigg)}.
\end{flalign*}
\noindent Since $\displaystyle\sqrt[m]{m\,\,\Gamma(m)}$ and $\displaystyle\sqrt[m]{(-2^{1-m}\,\pi^{-m}\, \sin\bigg(\frac{\pi\cdot(1-m)}{2}\bigg)\,\zeta(m)}$ 
both converge as $m\rightarrow 0$, we can transform the limit of the product into a product of limits:
\begin{flalign*}
L&=\lim_{m\rightarrow 0}\sqrt[m]{m\,\,\Gamma(m)} \,\cdot \,\lim_{m\rightarrow 0}\sqrt[m]{-2^{1-m}\,\pi^{-m}\, \sin\bigg(\frac{\pi\cdot(1-m)}{2}\bigg)\,\zeta(m)} \\
&=\lim_{m\rightarrow 0}\sqrt[m]{m\,\,\Gamma(m)} \,\cdot \,\lim_{m\rightarrow 0}\frac{1}{\pi}\sqrt[m]{-2^{1-m}\, \sin\bigg(\frac{\pi\cdot(1-m)}{2}\bigg)\,\zeta(m)}.
\end{flalign*}
As the right limit is equal to one by Lemma \ref{lem}, only the left factor remains. Applying the fundamental property of the gamma function, we finish the proof of \thmref{thm4.2}:
\[L=\lim_{m\rightarrow 0} \sqrt[m]{\Gamma(m+1)}=GM(0)=e^{-\gamma}.\]
\end{proof}
\noindent Thus we have established that
\[M_{m}(0)=\sqrt[m]{-m\,\,\zeta(1-m)}\text{ for all }m\in\mathbb{Z}.\]
\newpage

\subsection{Means on the unit interval}\label{interval}

So far, we have only taken means over collections of numbers, but we can also take means of functions \cite{Comenetz}:

\begin{equation}
\label{amfunction}
\bar f \coloneqq\frac{1}{b-a}\,\int_{a}^{b} f(x)\, dx,
\end{equation}
where $\bar f$ is the arithmetic mean of $f$ on the interval $[a,b]$. \\
\indent Let $f$ be the identity function. Then the analogue to the generalized mean on an interval is given by

\begin{equation}
\label{interval1}
M_m[a,b]\coloneqq\sqrt[m]{\frac{1}{b-a}\,\int_{a}^{b} x^{m}\,dx} \text{ for integers } m\neq 0.
\end{equation}
\indent To obtain the geometric mean, we go back to Definition \ref{amfunction}. We see that $\bar f$ resembles a continuous version of the usual arithmetic mean as discussed at the beginning of Section \ref{pyth}. Let us now start with the usual geometric mean and design a continuous version. We have
\[GM(x_1,...,x_n)=\prod\limits_{i=1}^{n} x_i^{\frac{1}{n}}=\exp{\sum\limits_{i=1}^{n} \log{x_i^{\frac{1}{n}}}}.\]
For the continuous version, we replace the series with a definite integral going from $a$ to $b$ and change the exponent $\frac{1}{n}$ to $\frac{1}{b-a}.$ Then we obtain
\[GM[a,b]\coloneqq\exp{\,\frac{\int_{a}^{b}\log{x}\,dx}{b-a}}.\]
We focus on the unit interval $[a,b]=[0,1]$. This gives
\[M_m[0,1]=\sqrt[m]{\int_{0}^{1} x^{m}\,dx} \text{ for nonzero integers } m,\]
and for $m=0$ \[GM[0,1]=\exp{\int_{0}^{1}\log{x}\,dx}.\]

\noindent Evaluation by means of elementary calculus gives $AM[0,1]=\frac{1}{2}$, $GM[0,1]=\frac{1}{e}$, and $HM[0,1]=0$.
Notice that we have already seen the first two values at the beginning of Section \ref{pyth}, as the coefficients of the leading term in the Laurent expansions. For the harmonic mean, the $0$ can be identified with the coefficient $\frac{1}{\log{(x)}+\gamma}$ in the corresponding Laurent expansion for $M_{-1}(x)$, where x is large.
\newpage
\noindent The general formula \eqref{interval1} can easily be simplified to \[M_m[0,1]=\frac{1}{\sqrt[m]{m+1}} \text{ for any }m\in\mathbb{N},\]
which is exactly the coefficient of the leading term in equation \eqref{M_mPos}. 
Because of this similarity we have to say that there is a striking connection between the reals on the interval $[0,1]$ and the positive integers on the interval $[1,\infty)$, at least concerning generalized means.
\newpage

\section{Pythagorean Series}\label{pythsrs}

We have already discussed the Pythagorean means. However, there are also arithmetic, geometric, and harmonic series, which we will call "Pythagorean series", in resemblance to the corresponding means. Perhaps the most famous one is the geometric series, where it is well known that 
\[\sum\limits_{k=0}^{\infty} x^{k}=\frac{1}{1-x}\text{ for any }\abs{x}<1.\]
The summation over powers of $x$ does look quite similar to the equation for the exponential function,
\[e^{x}=\sum\limits_{k=0}^{\infty}\frac{x^{k}}{k!}\text{ for any }x\in\mathbb{R}.\]
Thus, it seems only natural to insert the factor $\frac{1}{k!}$ also into the arithmetic and harmonic series and see what happens. For the arithmetic series, where the first term is equal to the common difference, we get
\[\sum\limits_{k=0}^{\infty} \frac{x+k}{k!}=e\cdot(x+1).\]
This is easy to prove by splitting the fraction and using the power series of $e^{x}$. Considering the harmonic series, we decide to write them in the generalized form 
\[\sum\limits_{k=1}^{\infty} k^{x}=\zeta(-x),\]
where $\zeta(x)$ is the Riemann zeta function and the special case $x=1$ gives $\emph{the}$ harmonic series $\sum\limits_{k=1}^{\infty}\frac{1}{k}$.
We thus write 
\[\sum\limits_{k=1}^{\infty} \frac{k^{x}}{k!}=\colon h(x).\]
We notice that for positive integers $n$, $h(n)=c_n e,$ with $c_n$ being some positive integer with the first few values $1,2,5,15,52,203,877$ for $n=1,2,3,4,5,6,7$, respectively. We observe \cite{bell} that these are exactly the same values as the corresponding Bell numbers $B_n$, which is exactly what Dobínski's formula \cite{dobinski} states. We note that $h(0)=e-1$ (this means in some sense that the 0th Bell number $B_0=1-\frac{1}{e}$), and $h(-1)=\operatorname{Ei}(1)-\gamma$, where $\operatorname{Ei(x)}$ denotes the exponential integral $\int_{-\infty}^{x}\frac{e^{t}}{t}\,dt$. Taking the limits to the infinities, it is easy to see that
\[\lim_{x\rightarrow\infty}h_x = \infty\text{ and }\lim_{x\rightarrow-\infty}h_x= 0.\]
For negative integers $\leq-2$, the closed form for the values of $h(n)$ involve generalized hypergeometric functions \cite{HQF}, which lie beyond the scope of this paper. Just to give an example, $h(-2)=\,_3F_3(1,1,1;2,2,2;1)\approx 1.14649907$.

However, there are other ways we can play with the harmonic series that do not necessitate the use of generalized hypergeometric functions. One of them, alternation, is discussed in the next section.

\newpage

\section{Natural Logarithm}\label{logs}

\subsection{Double series formula}\label{dblsrs}
It is well known that the alternating harmonic series \[\sum\limits_{n=1}^{\infty} \frac{(-1)^{n+1}}{n}=1-\frac{1}{2}+\frac{1}{3}-\frac{1}{4}+...\]
converges to $\log{2}$. This raises the question if there are any equivalent series for $\log{3}$, $\log{4}$, and so on. At first glance it seems impossible to generalize the above series, since the only thing that distinguishes it from the original harmonic series are the signs, and there are only two additive signs, + and $-$. However, we can rewrite the series as
\[\sum\limits_{n=1}^{\infty} \frac{1}{2n-1}-\frac{1}{2n}\]
because of the following
\begin{lemma}
\[\sum\limits_{n=1}^{\infty} \frac{1}{2n-1}-\frac{1}{2n}=\log{2}.\]
\end{lemma}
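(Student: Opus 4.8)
The plan is to recognize the left-hand side as a genuine regrouping of the alternating harmonic series and to evaluate its partial sums in closed form via harmonic numbers. Concretely, I would first introduce the $N$-th partial sum
\[
S_N \coloneqq \sum_{n=1}^{N}\left(\frac{1}{2n-1}-\frac{1}{2n}\right)
= \sum_{k=1}^{2N}\frac{1}{k} - 2\sum_{n=1}^{N}\frac{1}{2n}
= H_{2N} - H_N,
\]
where the middle equality is obtained by adding and subtracting the even-indexed terms $\tfrac{1}{2n}$, and $H_m$ is the $m$-th harmonic number as in \eqref{eulergamma}. This reduces the problem to the asymptotics of $H_{2N}-H_N$.

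Next I would pass to the limit using the definition \eqref{eulergamma} of $\gamma$: writing $H_{2N}=\log(2N)+\gamma+\varepsilon_{2N}$ and $H_N=\log N+\gamma+\varepsilon_N$ with $\varepsilon_m\to 0$, the constants $\gamma$ cancel, and
\[
S_N = \log(2N)-\log N + \varepsilon_{2N}-\varepsilon_N = \log 2 + (\varepsilon_{2N}-\varepsilon_N) \xrightarrow[N\to\infty]{}\ \log 2 .
\]
Since $S_N$ is precisely the sequence of partial sums of $\sum_{n\ge 1}\!\left(\tfrac{1}{2n-1}-\tfrac{1}{2n}\right)$, the series converges and its sum is $\log 2$.

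I expect essentially no genuine obstacle here; the only point deserving a word of care is that no illegitimate rearrangement is taking place — we work throughout with the honest partial sums $S_N$ and show that \emph{these} converge, rather than reordering a conditionally convergent series. Worth noting as an even shorter alternative, given that the convergence of $1-\tfrac12+\tfrac13-\cdots$ to $\log 2$ is already taken as known: $S_N$ equals the $2N$-th partial sum of the alternating harmonic series, and a subsequence of a convergent sequence inherits its limit, so $S_N\to\log 2$. I would nonetheless present the $H_{2N}-H_N$ computation as the main argument, since it is self-contained and ties the identity directly to the constant $\gamma$ just introduced in \eqref{eulergamma}.
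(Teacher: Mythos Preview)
Your argument is correct and entirely self-contained: the identity $S_N=H_{2N}-H_N$ is verified correctly, and the passage to the limit via $H_m=\log m+\gamma+o(1)$ is clean.

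It is, however, a genuinely different route from the paper's proof of this lemma. The paper invokes the closed form of the digamma function at half-integers,
\[
\psi^{(0)}\!\Big(q+\tfrac12\Big)=-\gamma-2\log 2+2\sum_{k=1}^{q}\frac{1}{2k-1},
\]
combines it with $\sum_{k\le q}\tfrac{1}{2k}=\tfrac12 H_q=\tfrac12(\psi^{(0)}(q+1)+\gamma)$, and then lets $q\to\infty$ using that $\psi^{(0)}(q+\tfrac12)-\psi^{(0)}(q+1)\to 0$. Your approach avoids special functions altogether and relies only on harmonic numbers and the definition of $\gamma$; in fact it is exactly the $m=2$ instance of the method the paper itself uses a page later to prove the general identity $\sum_{n,k}\big(\tfrac{1}{mn-k}-\tfrac{1}{mn}\big)=\log m$ via $S_b=H_{mb}-H_b$. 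So your proof is more elementary and foreshadows the general argument, while the paper's digamma computation buys an exact finite-$q$ formula (with the $\log 2$ already isolated) at the cost of importing a special-function identity.
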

\begin{proof}
The digamma function \cite{digamma} at half-integral values is
\[\psi^{(0)}\bigg(q+\frac{1}{2}\bigg)=-\gamma -2\log{2}+2\sum\limits_{k=1}^{q} \frac{1}{2k-1}.\]
Rearranging gives
\[\sum\limits_{k=1}^{q} \frac{1}{2k-1}=\frac{\psi^{(0)}\big(q+\frac{1}{2}\big)+\gamma}{2}+\log{2}.\]
Furthermore, we have
\[\sum\limits_{k=1}^{q} \frac{1}{2k}=\frac{H_{q}}{2}=\frac{\psi^{(0)}(q+1)+\gamma}{2}.\]
Hence
\[\sum\limits_{k=1}^{q} \frac{1}{2k-1}-\frac{1}{2k}=\log{2}+\frac{\psi^{(0)}\big(q+\frac{1}{2}\big)-\psi^{(0)}(q+1)
}{2}.\]
Since $\psi^{(0)}$ is concave on $[1,\infty)$, taking the limit as $q\rightarrow\infty$ we obtain
\begin{align*}
\sum\limits_{k=1}^{\infty} \frac{1}{2k-1}-\frac{1}{2k} &=\log{2}+\lim_{q\rightarrow \infty}\frac{\psi^{(0)}\big(q+\frac{1}{2}\big)-\psi^{(0)}(q+1)}{2}\\
&=\log{2}.
\end{align*}
\end{proof}
Now it is clearer that the alternating harmonic series is concerned with the residues of the positive integers upon division by $2$. Obviously, the same number of terms are added and subtracted, to guarantee convergence. We thus hypothesize that \[\log{3}=\sum\limits_{n=1}^{\infty} \frac{1}{3n-2}+\frac{1}{3n-1}-\frac{2}{3n},\] which can be verified computationally. Furthermore,
\[\log{m}=\sum\limits_{n=1}^{\infty}\bigg(-\frac{m-1}{mn}+\sum\limits_{k=1}^{m-1} \frac{1}{mn-k}\bigg)\text{ for all } m\geq 2,\, m\in\mathbb{N}\]
in general and
\[\log{4}=\sum\limits_{n=1}^{\infty} \frac{1}{4n-3}+\frac{1}{4n-2}+\frac{1}{4n-1}-\frac{3}{4n},\] in particular.
These can be rewritten as
\begin{equation}
	\label{power}
	 \log{m}=\sum\limits_{n=1}^{\infty}\sum\limits_{k=1}^{m-1} \bigg(\frac{1}{mn-k}-\frac{1}{mn}\bigg),
\end{equation}
and
\[\log{4}=\sum\limits_{n=1}^{\infty} \bigg(\frac{1}{4n-3}-\frac{1}{4n}\bigg)+\bigg(\frac{1}{4n-2}-\frac{1}{4n}\bigg)+\bigg(\frac{1}{4n-1}-\frac{1}{4n}\bigg),\] 
respectively. We now prove
\begin{theorem}
\label{six}
\[\sum\limits_{n=1}^{\infty}\sum\limits_{k=1}^{m-1} \Bigg(\frac{1}{mn-k}-\frac{1}{mn}\Bigg)=\log{m} \quad\quad
\text{ for all integers } m\geq 2.
\]
\end{theorem}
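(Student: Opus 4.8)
The plan is to reduce the double series to a difference of ordinary harmonic numbers by computing its partial sums in closed form, and then to conclude with the defining property of $\gamma$ from \eqref{eulergamma} — essentially the same strategy used for the preceding lemma, but with an explicit reindexing in place of the digamma identity.

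Fix an integer $m\ge 2$. Because the inner sum over $k$ is finite, the partial sum of the double series in the order written is
\[
S_N \coloneqq \sum_{n=1}^{N}\sum_{k=1}^{m-1}\left(\frac{1}{mn-k}-\frac{1}{mn}\right),
\]
and I would first record that each grouped term equals $\frac{1}{mn-k}-\frac{1}{mn}=\frac{k}{mn(mn-k)}=O(n^{-2})$, so the inner sum is $O(n^{-2})$ and the series converges absolutely; in particular no rearrangement issue arises and it suffices to evaluate $\lim_{N\to\infty}S_N$.

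The key step is a counting observation: as $n$ ranges over $1,\dots,N$ and $k$ over $1,\dots,m-1$, the quantity $mn-k$ ranges exactly once over every integer in $\{1,2,\dots,mN\}$ that is not a multiple of $m$, since for fixed $n$ the values $mn-1,\dots,mn-(m-1)$ fill the gap $m(n-1)<j<mn$. Hence
\[
\sum_{n=1}^{N}\sum_{k=1}^{m-1}\frac{1}{mn-k}=\sum_{\substack{1\le j\le mN\\ m\nmid j}}\frac1j = H_{mN}-\frac1m H_N,
\]
while $\sum_{n=1}^{N}\sum_{k=1}^{m-1}\frac{1}{mn}=\frac{m-1}{m}H_N$, so that $S_N = H_{mN}-H_N$. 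Finally, writing $H_{mN}=\log(mN)+\gamma+o(1)$ and $H_N=\log N+\gamma+o(1)$ via \eqref{eulergamma}, the constants cancel and $S_N\to\log(mN)-\log N=\log m$, which is the claim.

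The only genuine obstacle is getting the reindexing bookkeeping exactly right — verifying that $\{\,mn-k:1\le n\le N,\ 1\le k\le m-1\,\}$ is precisely $\{1,\dots,mN\}$ with the multiples of $m$ deleted, each value hit once — after which everything is mechanical; there is no analytic subtlety, and the absolute-convergence remark disposes of any worry about the order of summation. One could alternatively route the limit step through $\psi^{(0)}$ as in the lemma, but the elementary harmonic-number form seems cleanest.
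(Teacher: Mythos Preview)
Your proof is correct and follows essentially the same approach as the paper: both compute the partial sum explicitly as $S_N=H_{mN}-H_N$ and then conclude via the asymptotic $H_n=\log n+\gamma+o(1)$. The only differences are cosmetic—you reach $H_{mN}-H_N$ by noting that $mn-k$ enumerates the non-multiples of $m$ in $\{1,\dots,mN\}$, whereas the paper reindexes via $j=m-k$ and telescopes—and you add a brief absolute-convergence remark that the paper omits.
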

\begin{proof}
\begin{flalign*}
S_b\coloneqq\sum\limits_{n=1}^{b}\sum\limits_{k=1}^{m-1} \Bigg(\frac{1}{mn-k}-\frac{1}{mn}\Bigg)&=\sum\limits_{n=1}^{b}\sum\limits_{k=0}^{m-1} \Bigg(\frac{1}{mn-k}-\frac{1}{mn}\Bigg) \\
&=\sum\limits_{n=1}^{b}\Bigg(-\frac{1}{n}+\sum\limits_{k=0}^{m-1} \frac{1}{mn-k}\Bigg) \\
&=-H_{b}+\sum\limits_{n=1}^{b}\sum\limits_{k=0}^{m-1} \frac{1}{mn-k} \\ 
&=-H_{b}+\sum\limits_{n=1}^{b}\sum\limits_{k=0}^{m-1} \frac{1}{(n-1)m+m-k}. 
\end{flalign*}
Substituting $j\coloneqq m-k$, we write
\begin{flalign*}
S_b&=-H_{b}+\sum\limits_{n=1}^{b}\sum\limits_{j=1}^{m} \frac{1}{(n-1)m+j}\\
&=-H_{b}+\sum\limits_{n=1}^{b}\big(H_{n\cdot m}-H_{(n-1)\cdot m}\big)\\
&=-H_{b}+H_{m\cdot b}.
\end{flalign*}
Taking the limit as $b \rightarrow\infty$ and applying the definition of the Euler-Mascheroni constant $\gamma$ we end up with:
\begin{flalign*}
\lim_{b \rightarrow\infty}\sum\limits_{n=1}^{b}\sum\limits_{k=1}^{m-1} \bigg(\frac{1}{mn-k}-\frac{1}{mn}\bigg)&=\lim_{b \rightarrow\infty} H_{mb}-H_{b}\\
&=\lim_{b \rightarrow\infty}\log{(mb)}\,+\gamma-(\log{(b)}\,+\gamma)\\
&=\lim_{b \rightarrow\infty}\log{(mb)}\,-\log{b}\\
&=\log{m}  \quad\quad \text{ for all integers } m\geq 2.
\end{flalign*}
\end{proof}

We shall proceed with the use of $\log{4}$ as an example.\\
\newpage

\subsection{First power logarithm}

Inspired by the similarity with the Riemann zeta function for positive integers $s\geq 2$, $\zeta(s)=\sum\limits_{n=1}^{\infty} \frac{1}{n^{s}}$, we also bring in powers. Two sensible ways for generalizing \thmref{six} are either by raising every term to a specific power, in our example this would be
\[\log^{(2), 1}{4}\coloneqq\sum\limits_{n=1}^{\infty} \bigg(\frac{1}{4n-3}\bigg)^{2}-\bigg(\frac{1}{4n}\bigg)^{2}+\bigg(\frac{1}{4n-2}\bigg)^{2}-\bigg(\frac{1}{4n}\bigg)^{2}+\bigg(\frac{1}{4n-1}\bigg)^{2}-\bigg(\frac{1}{4n}\bigg)^{2},\] 
or by first subtracting the two terms in brackets as shown in \eqref{power}, and then raising to said power, e.g.
\[\log^{(2), 2}{4}\coloneqq\sum\limits_{n=1}^{\infty} \bigg(\frac{1}{4n-3}-\frac{1}{4n}\bigg)^{2}+\bigg(\frac{1}{4n-2}-\frac{1}{4n}\bigg)^{2}+\bigg(\frac{1}{4n-1}-\frac{1}{4n}\bigg)^{2}.\] Here the superscript $^{(2)}$ denotes the power of 2. Generalizing $\log_1^{(2)}{4}$ for positive integers $m\geq 2$ and powers $p\in\mathbb{R}$, we get
\[\log^{(p), 2}{m}\coloneqq\sum\limits_{n=1}^{\infty} \sum\limits_{k=1}^{m-1} \Bigg(\bigg(\frac{1}{mn-k}\bigg)^{p}-\bigg(\frac{1}{mn}\bigg)^{p}\Bigg).\] 
We have thus created a vast array of possible combinations of values for $m$ and $p$, the most worthy of consideration seem to be the ones, where one quantity is fixed and the other approaches infinity. Let us first observe the case $p=2$, with varying $m$:
\[\log^{(2), 1}{m}=\sum\limits_{n=1}^{\infty} \sum\limits_{k=1}^{m-1} \Bigg(\bigg(\frac{1}{mn-k}\bigg)^{2}-\bigg(\frac{1}{mn}\bigg)^{2}\Bigg).\] 
\noindent For $m=2$, we have
\begin{flalign*}
\log^{(2), 1}{2}&=\lim_{b\rightarrow\infty}\sum\limits_{n=1}^{b} \sum\limits_{k=1}^{2-1} \Bigg(\bigg(\frac{1}{2n-k}\bigg)^{2}-\bigg(\frac{1}{2n}\bigg)^{2}\Bigg) \\
&=\lim_{b\rightarrow\infty} \frac{1}{12}\bigg(-3\,\psi^{(1)}\bigg(b+\frac{1}{2}\bigg)+3\,\psi^{(1)}\big(b+1\big)+\pi^{2}\bigg)\\
&=\frac{\pi^{2}}{12}+\frac{1}{4}\lim_{b\rightarrow\infty}\bigg(\psi^{(1)}\big(b+1\big)-\psi^{(1)}\bigg(b+\frac{1}{2}\bigg)\bigg),
\end{flalign*}
where $\psi^{(1)}$ is the trigamma function, which approaches zero as its argument gets large. Since the limit goes to $0-0=0$, we get
\[\log^{(2), 1}{2}=\frac{\pi^{2}}{12}.
\]

\noindent Similarly, we can compute more values to arrive at the following list $\Big(m,\log^{(2), 1}{m}\Big)$:
\[\bigg(2,\frac{\pi^{2}}{12}\bigg),\bigg(3,\frac{\pi^{2}}{9}\bigg),\bigg(4,\frac{\pi^{2}}{8}\bigg),\bigg(5,\frac{2\pi^{2}}{15}\bigg),\bigg(6,\frac{5\pi^{2}}{36}\bigg),\bigg(7,\frac{\pi^{2}}{7}\bigg),\bigg(8,\frac{7\pi^{2}}{48}\bigg),\bigg(9,\frac{4\pi^{2}}{27}\bigg),\]

\noindent which are all of the form $\displaystyle\bigg(m,\frac{m-1}{6m}\cdot\pi^{2}\bigg)$, where the second component obviously approaches $\displaystyle\frac{\pi^{2}}{6}=\zeta(2)$ as $m$ gets arbitrarily large. 
For $p=3$, we proceed similarly to obtain $\displaystyle\Big(m,\log^{(3), 1}{m}\Big)$: \\
\[\bigg(2,\frac{3\zeta(3)}{4}\bigg)\,, \bigg(3,\frac{8\zeta(3)}{9}\bigg)\, ,\bigg(4,\frac{15\zeta(3)}{16}\bigg)\, ,\bigg(5,\frac{24\zeta(3)}{25}\bigg).\] \\

\indent Now we fix $m=2$ and vary $p$; we compute $\Big(p,\log^{(p), 1}{2}\Big)$: \\
\[\bigg(2,\frac{\pi^{2}}{12}\bigg)\,,\bigg(3,\frac{3\,\zeta(3)}{4}\bigg)\,,\bigg(4,\frac{7\,\pi^{4}}{720}\bigg)\,,\bigg(5,\frac{15\,\zeta(5)}{16}\bigg)\,,\bigg(6,\frac{31\,\pi^{6}}{30240}\bigg).\]
We can rewrite this list as \\
\[\bigg(2,\frac{1}{2}\,\zeta(2)\bigg)\,,
\bigg(3,\frac{3}{4}\,\zeta(3)\bigg)\,,
\bigg(4,\frac{7}{8}\,\zeta(4)\bigg)\,,
\bigg(5,\frac{15}{16}\,\zeta(5)\bigg)\,,
\bigg(6,\frac{31}{32}\,\zeta(6)\bigg).\]

\noindent Since $\displaystyle\lim_{s\rightarrow\infty} \zeta(s)=1$, it trivially follows that 
\[\lim_{p \rightarrow\infty}\log^{(p), 1}{2}=1.\]
Fixing $m=3$ and varying $p$ we compute $\Big(p,\log^{(p), 1}{3}\Big)$: \\
\[\displaystyle\bigg(2,\frac{2}{3}\zeta(2)\bigg)\,,\bigg(3,\frac{8}{9}\zeta(3)\bigg)\,,\bigg(4,\frac{26}{27}\zeta(4)\bigg)\bigg(5,\frac{80}{81}\zeta(5)\bigg)\,,\bigg(6,\frac{242}{243}\zeta(6)\bigg).\]
This leads us to assume that $\displaystyle\log^{(p), 1}{3}=\frac{3^{p-1}-1}{3^{p-1}}\cdot\zeta(p),$ which makes sense taking into account what we constructed above, namely \[\displaystyle\log^{(2), 1}{m}=\frac{m-1}{6m}\cdot\pi^{2}=\frac{m-1}{m}\cdot\zeta(2) \,\,\,\,\&\,\,\,\, \displaystyle\log^{(3), 1}{m}=\frac{m^{2}-1}{m^2}\cdot\zeta(3).\]
Based on the previous results we have the following
\begin{conjecture}
\[\log^{(p), 1}{m}=\sum\limits_{n=1}^{\infty} \sum\limits_{k=1}^{m-1} \Bigg(\bigg(\frac{1}{mn-k}\bigg)^{p}-\bigg(\frac{1}{mn}\bigg)^{p}\Bigg)=\frac{m^{p-1}-1}{m^{p-1}}\,\zeta{(p)},
\]
\end{conjecture}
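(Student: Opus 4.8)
The plan is to imitate the proof of Theorem~\ref{six} almost verbatim: work with the partial sums, never rearrange an infinite series, and pass to the limit only at the end. Write $H_{N}^{(p)}\coloneqq\sum_{j=1}^{N}j^{-p}$ for the generalized harmonic numbers, so $\lim_{N\to\infty}H_{N}^{(p)}=\zeta(p)$ whenever $p>1$. The guiding observation, just as in Theorem~\ref{six}, is that once the vanishing $k=0$ term is adjoined, the inner sum over $k$ reorganises into a block decomposition of the positive integers.

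First I would adjoin the $k=0$ summand $(mn)^{-p}-(mn)^{-p}=0$ and split the finite double sum:
\[
S_{b}\coloneqq\sum_{n=1}^{b}\sum_{k=1}^{m-1}\left(\frac{1}{(mn-k)^{p}}-\frac{1}{(mn)^{p}}\right)=\sum_{n=1}^{b}\sum_{k=0}^{m-1}\frac{1}{(mn-k)^{p}}-\sum_{n=1}^{b}\frac{m}{(mn)^{p}}.
\]
The second sum is $\frac{1}{m^{p-1}}\sum_{n=1}^{b}n^{-p}=\frac{1}{m^{p-1}}H_{b}^{(p)}$. For the first, the key point is that $(n,k)\mapsto mn-k$ is a bijection from $\{1,\dots,b\}\times\{0,\dots,m-1\}$ onto $\{1,2,\dots,mb\}$: for fixed $n$ the values $mn-k$ with $0\le k\le m-1$ run over the block $\{m(n-1)+1,\dots,mn\}$, and these $b$ consecutive blocks tile $\{1,\dots,mb\}$. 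Hence the first sum equals $H_{mb}^{(p)}$, and therefore $S_{b}=H_{mb}^{(p)}-m^{1-p}H_{b}^{(p)}$. For $p>1$ both $H_{mb}^{(p)}$ and $H_{b}^{(p)}$ converge to $\zeta(p)$, so letting $b\to\infty$ gives
\[
\log^{(p),1}{m}=\zeta(p)-m^{1-p}\zeta(p)=\frac{m^{p-1}-1}{m^{p-1}}\,\zeta(p),
\]
which is the claim. At $p=1$ the right-hand side degenerates to the indeterminate form $\frac{0}{1}\cdot\zeta(1)$; but there the left-hand side still makes sense and equals $\log m$ by Theorem~\ref{six} and \eqref{power}, so the pattern persists continuously.

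The only step that requires genuine care --- and the natural place to get stuck --- is honoring the stated range $p\in\mathbb{R}$ all the way down to $0<p<1$ (and, more ambitiously, to complex $s$ with $\operatorname{Re}(s)>0$, $s\neq1$, with $\zeta$ understood by analytic continuation). There $H_{b}^{(p)}$ and $H_{mb}^{(p)}$ diverge individually, so $S_{b}=H_{mb}^{(p)}-m^{1-p}H_{b}^{(p)}$ is an $\infty-\infty$ form. The plan is to substitute the analytic-continuation expansion $H_{N}^{(p)}=\zeta(p)+\frac{N^{1-p}}{1-p}+O(N^{-p})$ (a one-line Euler--Maclaurin estimate): the divergent contributions are $\frac{(mb)^{1-p}}{1-p}=m^{1-p}\cdot\frac{b^{1-p}}{1-p}$ from $H_{mb}^{(p)}$ and $m^{1-p}\cdot\frac{b^{1-p}}{1-p}$ from $m^{1-p}H_{b}^{(p)}$, which cancel identically, leaving $S_{b}\to(1-m^{1-p})\zeta(p)$ once more. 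The same bookkeeping, with $N^{1-s}$ in place of $N^{1-p}$, upgrades the result to an identity of meromorphic functions on $\operatorname{Re}(s)>0$. Two points then need to be checked: that the double series still converges when $0<\operatorname{Re}(s)\le1$ (it does --- term by term, $(mn-k)^{-s}-(mn)^{-s}=O(n^{-\operatorname{Re}(s)-1})$, which is summable over $n$), and that the Euler--Maclaurin remainder is controlled uniformly in the passage to the limit. For the integer powers $p\ge2$ that underlie the conjecture's numerical evidence, none of this is necessary and the previous paragraph already gives a complete proof.
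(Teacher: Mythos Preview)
The paper does not prove this statement at all: it is explicitly labelled a \emph{Conjecture}, presented on the basis of the numerical evidence for small $m$ and $p$, and immediately followed by a corollary (also unproved). So there is no ``paper's own proof'' to compare against.

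Your argument, however, \emph{is} a correct proof for every real $p>1$ (and in particular for the integer powers $p\ge 2$ that the paper's evidence covers). The computation $S_b=H_{mb}^{(p)}-m^{1-p}H_b^{(p)}$ is exactly the $p$-th power analogue of the identity $S_b=H_{mb}-H_b$ that the paper derives in the proof of Theorem~\ref{six}; you have simply observed that the same block decomposition $\{1,\dots,mb\}=\bigcup_{n=1}^{b}\{m(n-1)+1,\dots,mn\}$ works verbatim for $j^{-p}$ in place of $j^{-1}$, and that for $p>1$ the passage to the limit is \emph{easier} than in Theorem~\ref{six} because both generalized harmonic numbers converge to $\zeta(p)$ rather than diverging. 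This not only settles the conjecture but also yields the corollary $\lim_{m\to\infty}\log^{(p),1}m=\zeta(p)$ immediately.

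Your discussion of the range $0<p<1$ (and complex $s$) is a genuine extension beyond what the conjecture even asks. The sketch is sound: the termwise bound $(mn-k)^{-s}-(mn)^{-s}=O(n^{-\operatorname{Re}(s)-1})$ follows from the mean value theorem and gives absolute convergence of the double series for $\operatorname{Re}(s)>0$, and the Euler--Maclaurin expansion $H_N^{(s)}=\zeta(s)+\frac{N^{1-s}}{1-s}+O(N^{-\operatorname{Re}(s)})$ makes the $\infty-\infty$ cancellation in $S_b$ precise. If you want this part to stand on its own, the only thing to tighten is the remainder bound (state it with an explicit constant depending on $s$ but not on $N$), but for the conjecture as stated none of this is needed.
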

\noindent with the
\begin{corollary} For any integer $p\geq 2,$
\[\lim_{m \rightarrow\infty} \log^{(p), 1}{m}=\lim_{m \rightarrow\infty}\sum\limits_{n=1}^{\infty} \sum\limits_{k=1}^{m-1} \Bigg(\bigg(\frac{1}{mn-k}\bigg)^{p}-\bigg(\frac{1}{mn}\bigg)^{p}\Bigg)=\zeta(p).\] 
\end{corollary}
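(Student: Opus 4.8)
The plan is to prove the preceding Conjecture outright and then pass to the limit in $m$; the closed form $\log^{(p),1}{m}=\tfrac{m^{p-1}-1}{m^{p-1}}\zeta(p)$ is in fact a direct consequence of the very telescoping used in the proof of Theorem~\ref{six}. First I would record that for $p\ge 2$ each summand $\big(\tfrac{1}{mn-k}\big)^{p}-\big(\tfrac{1}{mn}\big)^{p}$ is non-negative and the whole double array is dominated, term by term, by $\sum_{i\ge1}i^{-p}=\zeta(p)<\infty$; hence the double series converges absolutely and its partial sums may be regrouped, re-indexed and telescoped freely.

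For the main computation I would mimic the proof of Theorem~\ref{six}, with each $\tfrac1{(\,\cdot\,)}$ replaced by $\tfrac1{(\,\cdot\,)^{p}}$. Adjoining the harmless $k=0$ term, the partial sum becomes
\[
S_b\;=\;\sum_{n=1}^{b}\sum_{k=0}^{m-1}\frac{1}{(mn-k)^{p}}\;-\;\sum_{n=1}^{b}\frac{m}{(mn)^{p}} .
\]
The substitution $j\coloneqq m-k$ identifies the first double sum with $\sum_{n=1}^{b}\sum_{j=1}^{m}\big((n-1)m+j\big)^{-p}=\sum_{i=1}^{mb}i^{-p}=H^{(p)}_{mb}$, while the second equals $m^{1-p}\sum_{n=1}^{b}n^{-p}=m^{1-p}H^{(p)}_{b}$, so $S_b=H^{(p)}_{mb}-m^{1-p}H^{(p)}_{b}$. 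Letting $b\to\infty$ and using $H^{(p)}_{b}\to\zeta(p)$ (legitimate since $p\ge2$) yields $\log^{(p),1}{m}=\zeta(p)-m^{1-p}\zeta(p)=\tfrac{m^{p-1}-1}{m^{p-1}}\zeta(p)$, which is the Conjecture. Since $p-1\ge1$, we have $m^{1-p}\to0$ as $m\to\infty$, and therefore $\lim_{m\to\infty}\log^{(p),1}{m}=\zeta(p)$, as claimed.

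I do not expect a genuine obstacle here: the only step needing a word of care is the manipulation of the double series (splitting off $k=0$, re-indexing by $j=m-k$, and telescoping the blocks into $H^{(p)}_{mb}$), which is exactly covered by the absolute-convergence remark above. In particular the two passages to the limit do not truly interleave — after the inner limit $b\to\infty$ one already has an exact closed form in $m$, so the outer limit $m\to\infty$ is elementary. Should one prefer not to route through the Conjecture, the same identity $S_b=H^{(p)}_{mb}-m^{1-p}H^{(p)}_{b}$ gives directly $0\le\zeta(p)-\log^{(p),1}{m}=m^{1-p}\zeta(p)\to0$, which is the corollary.
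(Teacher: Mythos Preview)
Your argument is correct. The telescoping computation $S_b=H^{(p)}_{mb}-m^{1-p}H^{(p)}_{b}$ is the exact $p$-power analogue of the proof of Theorem~\ref{six}, and for $p\ge 2$ both generalized harmonic numbers converge to $\zeta(p)$, giving the closed form $\log^{(p),1}{m}=\tfrac{m^{p-1}-1}{m^{p-1}}\zeta(p)$; the limit $m\to\infty$ is then immediate. The absolute-convergence remark you make is sufficient justification for the regrouping.

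The comparison with the paper is worth noting: the paper does \emph{not} prove this corollary. It records the identity $\log^{(p),1}{m}=\tfrac{m^{p-1}-1}{m^{p-1}}\zeta(p)$ only as a \emph{Conjecture}, supported by the computed lists for small $m$ and $p$, and then states the present corollary as a consequence of that conjecture. Your proposal therefore goes further: by transporting the telescoping of Theorem~\ref{six} to general exponent $p$, you actually establish the conjecture (hence the corollary) unconditionally. The route is the natural one the paper itself sets up but does not carry out.
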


\newpage

\subsection{Second power logarithm}

The second possibility of generalizing \thmref{six} is
\[\log^{(2), 2}{4}\coloneqq\sum\limits_{n=1}^{\infty} \bigg(\frac{1}{4n-3}-\frac{1}{4n}\bigg)^{2}+\bigg(\frac{1}{4n-2}-\frac{1}{4n}\bigg)^{2}+\bigg(\frac{1}{4n-1}-\frac{1}{4n}\bigg)^{2}.\] Generalizing $\log^{(2), 2}{4}$ for positive integers $m\geq 2$ and powers $p\in\mathbb{R}$, we obtain
\[\log^{(p), 2}{m}\coloneqq\sum\limits_{n=1}^{\infty} \sum\limits_{k=1}^{m-1} \bigg(\frac{1}{mn-k}-\frac{1}{mn}\bigg)^{p}.\] 
Let's fix $m=2$ and observe the behavior of $\log^{(p), 2}{2}$ as $p\geq1$ takes different integer values: \\

\begin{tabular}{l|l}
 p&  $\displaystyle\log^{(p), 2}{2}$   \\
\hline
 1&  $\displaystyle\log{2}$    \\
 2&  $\displaystyle\zeta(2)-2\log{2}$    \\
 3&  $\displaystyle\frac{3}{4}\zeta(3)-3\zeta(2)+6\log{2}$  \\
 4&  $\displaystyle\zeta(4)-3\zeta(3)+10\zeta(2)-20\log{2}$ \\
 5&  $\displaystyle\frac{15}{16}\zeta(5)-5\zeta(4)+\frac{45}{4}\zeta(3)-35\zeta(2)+70\log{2}$   \\
 6&  $\displaystyle\zeta(6)-\frac{45}{8}\zeta(5)+21\zeta(4)-42\zeta(3)+126\zeta(2)-252\log{2}$    \\
 7&  $\displaystyle\frac{63}{64}\zeta(7)-7\zeta(6)+\frac{105}{4}\zeta(5)-84\zeta(4)+\frac{315}{2}\zeta(3)-462\zeta(2)+924\log{2}$    \\
\end{tabular} \\ \\

\noindent Apart from the obvious prominence of the zeta function with descending input and alternating signs, two striking patterns are recognisable. Firstly, the coefficient of $\log{2}$ in the expansion of $\log^{(p), 2}{2}$ is equal to the central entry in the $2(p-1)$th row of Pascal's triangle, or the binomial coefficient $\binom{2(p-1)}{p-1}$, whilst the coefficient of $\zeta(2)$ is exactly half of the former. Regarding binomial coefficients, the third diagonal from the right, i.e. the coefficients of $\zeta(3)$, is equal to $\frac{3}{4}\binom{2p-4}{p-3}$, and the fourth to $\binom{2p-5}{p-4}$.
Secondly, the leading coefficient of the particular expressions, i.e. the coefficient of $\zeta(p)$ for $p\geq2$ and of $\log{2}$ for $p=1$, is equal to $1$ if $p$ is even, and equal to $\frac{2^{p-1}-1}{2^{p-1}}$ if $p$ is odd. \\
For the sake of visibility, we proceed with writing just the coefficients without the signs and multiplying them by $2$ raised to the power of one less than their corresponding diagonal, e.g. $\frac{45}{4}\zeta(3)\mapsto \frac{45}{4}\cdot 2^{3-1}=45$. Then we obtain \\

\begin{tabular}{l|lllllll}
p&	\\
\hline
1& 1    \\
2& 2&2    \\
3& 3&6&6  \\
4& 8&12&20&20  \\
5& 15&40&45&70&70  \\
6& 32&90&168&168&252&252  \\
7& 63&224&420&672&630&924&924  \\
\end{tabular} \\ \\
where the transformed coefficients of $\log{2}$ occupy the rightmost diagonal, and those of $\zeta(n)$ the n-th diagonal from the right. We see that the entries of the second column are equal to r times the corresponding entries of the first column which lie on the same diagonal, where r is the row number. E.g. $224=7\cdot 32$, and 224 is in the 7th row.
Computation strongly suggests that \[\lim_{p \rightarrow\infty} \log^{(p), 2}{2}=\lim_{p \rightarrow\infty}\sum\limits_{n=1}^{\infty} \bigg(\frac{1}{2n-1}-\frac{1}{2n}\bigg)^{p}=0,\] which makes sense since
$\frac{1}{2n-1}-\frac{1}{2n}=\frac{1}{4n^2-2n}\leq\frac{1}{n^2}\leq\frac{1}{n^{2p}}<1$
for all $n$, and a number less than 1 raised to $p$ tends to $0$ as $p$ tends to $\infty$. \\

Fixing $m$ to an integer bigger than 2 yields quite messy results, therefore we now turn to the case of fixed $p$ and variable $m$. Again, if $p$ exceeds 2, the computations become rather ugly, and we restrict ourselves to the case $p=2$:\\
As we already know, $\log^{(2), 2}{2}=\zeta(2)-2\log{2}.$ Moreover, we obtain
\begin{flalign*}
\log^{(2), 2}{3}&=\frac{5}{27}\,\pi^{2}+\frac{1}{6\,\sqrt{3}}\,\pi-\frac{3}{2}\,\log{3},\\
\log^{(2), 2}{4}&=\frac{3}{16}\,\pi^{2}+\frac{1}{6}\,\pi-\frac{5}{2}\log{4},\\
\log^{(2), 2}{5}&=\frac{14}{75}\,\pi^{2}+\frac{\sqrt{85+38\sqrt{5}}}{60}\,\pi+\frac{\sqrt{5}}{24}\log{\Bigg(\frac{3-\sqrt{5}}{2}\Bigg)}-\frac{25}{24}\log{5},\\
\log^{(2), 2}{6}&=\frac{5}{27}\,\pi^{2}+\frac{53}{120\sqrt{3}}\,\pi-\frac{46}{45}\,\log{2}-\frac{39}{40}\,\log{3}.
\end{flalign*}
We note that the expression corresponding to the argument 5 contains a nested radical. This is interesting, as this most prominently happens with $\sin{\frac{2\pi}{5}}$, but this newly established function $\log^{(2), 2}{m}$ does not seem to have any connection to trigonometric functions. \\
\indent When trying to find a pattern, we notice that although all values consist of combinations of $\pi^{2}$, $\pi$, and logarithms, there is no apparent pattern in the coefficients. It feels as though there should be one, for example the coefficients of the logarithms in the entries for 5 and 6 are fractions whose difference between numerator and denominator is 1; the coefficients of $\pi^{2}$ in the entries for 3 and 4 seem to be related to powers of said numbers, and yet, the parameters are very volatile and no obvious pattern is visible; as soon as new values are taken into consideration, the alleged structure dissipates into irksome disharmony. \\
\indent Comparing both versions of the power logarithm we observe that the first one displays much neater values and can be formalized effortlessly, whilst the second version behaves in a much less predictable fashion, but has more alluring patterns and mystery associated with it. That such a simple decision as where to put the parentheses can produce such discrepancies in the structure of the outcome illuminates part of the fascination that mathematics poses.

\newpage

\appendix

\section{Appendix}

\subsection{Acknowledgments}

The author thanks Dr. Richard Bödi for providing Proof \hyperlink{pf2.2}{2.2}, a simpler version \newline of Proof \hyperlink{pf2.1}{2.1}, and for minor corrections.

\subsection{Glossary}

\begin{tabular}{ll}
$H_{n}$ & $n$-th harmonic number \\
$H_n^{(m)}\coloneqq\sum\limits_{k=1}^{n} \frac{1}{k^{m}}$ & Generalized harmonic number \\
$\psi^{(n)}(x)$ & $(n+1)$-th derivative of $\log{\Gamma(x)}$ \\
$\psi^{(0)}(x)$ & Digamma function \\
$\pi(x)$ & Prime counting function, gives the number of primes less than or equal to $x$ \\
PNT & Prime Number Theorem \\
Hölder mean & A generalization of the Pythagorean means \\
Puiseux series & A power series containing fractional exponents and logarithms \cite{puiseux}\cite{Siegel} \\
Cauchy principle value & Method to assign finite values to divergent series \\
\end{tabular} \\ \\

\newpage


\begin{thebibliography}{1}

  \bibitem{powermean} Bullen, P.S. {\em Handbook of Means and Their Inequalities.} 2003: Dordrecht, Netherlands: Kluwer, chapter III (The Power Means), pp. 175-265.

\bibitem{puiseux} Davenport, J. H.; Siret, Y.; and Tournier, E. {\em Computer Algebra: Systems and Algorithms for Algebraic Computation, 2nd ed.} 1993:  San Diego: Academic Press, pp. 90-92.

  \bibitem{dobinski} Dobínski, G. {\em Summierung der Reihe $\sum n^{m}$ für $m=1,2,3,4,5,...$.} 1877: Grunert Archiv. 61,333-336.

 \bibitem{bonacci} "Fibonacci", Leonardo. {\em Liber Abaci.} 1202: ch.II.12.

\bibitem{hua} Hua, Loo-Keng. {\em Introduction to Number Theory.} 1982: Springer.


  \bibitem{locker-ernst} Locker-Ernst, L. {\em Bemerkung über die Verteilung der Primzahlen.} 1959: Elemente Math. (Basel).

	\bibitem{rassias}  Rassias, Michael Th. {\em Problem-Solving and Selected Topics in Number Theory.} 2011: Springer.

	

  \bibitem{riemn} Riemann, B. {\em Über die Anzahl der Primzahlen unter einer gegebenen Grösse.} 1859: Monatsberichte der Königlich Preußischen Akademie der Wissenschaften zu Berlin. 


\bibitem{Siegel} Siegel, C. L. {\em Topics in Complex Function Theory, Vol. 1: Elliptic Functions and Uniformization Theory.} 1988: New York: Wiley, p. 98. 

\bibitem{Comenetz} Comenetz, M. {\em Calculus: The Elements.} 2002: World Scientific, p. 159.

\bibitem{holder} Coonce, H. {\em Hölder at the Mathematics Genealogy Project.} \url{https://mathgenealogy.org/id.php?id=18608}. (last visited: March 18, 2022).



   \bibitem{bell} Sloane, N.J.A. {\em On-Line Encyclopedia of Integer Sequences: Bell numbers.} 2018: \url{http://oeis.org/A000110}. (last visited: March 18, 2022).

  \bibitem{chebypoly} Sloane, N.J.A. {\em On-Line Encyclopedia of Integer Sequences: Discriminant of Chebyshev C-Polynomials.} 2018: \url{http://oeis.org/A193678}. (last visited: March 18, 2022).

\bibitem{brnuli} Weisstein, Eric W. {\em Bernoulli Number.} From MathWorld--A Wolfram Web Resource. \url{https://mathworld.wolfram.com/BernoulliNumber.html}. (last visited: March 18, 2022).

\bibitem{digamma} Weisstein, Eric W. {\em Digamma Function.} From MathWorld--A Wolfram Web Resource. \url{https://mathworld.wolfram.com/DigammaFunction.html}. (last visited: August 27, 2022).

\bibitem{HQF} Weisstein, Eric W. {\em Hypergeometric Series.} From MathWorld--A Wolfram Web Resource. \url{https://mathworld.wolfram.com/HypergeometricSeries.html}. (last visited: March 18, 2022).


\bibitem{primecountingfunction} Weisstein, Eric W. {\em Prime Counting Function.} From MathWorld--A Wolfram Web Resource. \url{https://mathworld.wolfram.com/PrimeCountingFunction.html}. (last visited: March 18, 2022).




\end{thebibliography}
\end{document}